\newcommand{\ignore}[1]{}
\title{Amenability properties of Rajchman algebras}
\author[Mahya Ghandehari]{Mahya Ghandehari}
\address{Department of Mathematics and Statistics, Dalhousie University, Halifax, Nova Scotia, Canada, B3H 3J5}
\thanks{}
\email{ghandeh@mathstat.dal.ca}
\date{\today}
\newtheorem{theorem}{Theorem}[section]
\newtheorem{corollary}[theorem]{Corollary}
\newtheorem{lemma}[theorem]{Lemma}
\newtheorem{proposition}[theorem]{Proposition}
\newcommand{\cal}{\mathcal}
\def\sup{{\rm sup}}
\def\VN {{\rm VN}}
\def\Ind {{\rm Ind}}
\def\SL {{\rm SL}_2({\mathbb R})}
\def\GL {{\rm GL}_n({\mathbb R})}
\def\Hn {{\rm H}_n({\mathbb R})}
\def\Ind{{\rm Ind}}
\def\supp {{\rm supp}}
\def\Span {{\rm Span}}
\def\Tr {{\rm Tr}}
\begin{document}
\maketitle

\begin{abstract}
Rajchman measures of locally compact Abelian groups are studied for almost a century now, and they play an important role in the study of trigonometric series. Eymard's influential work allowed generalizing these measures to the case of \emph{non-Abelian} locally compact groups $G$. The Rajchman algebra of $G$, which we denote by $B_0(G)$, is the set of all elements of the Fourier-Stieltjes algebra that vanish at infinity.

In the present article, we  characterize the locally compact groups that have amenable Rajchman algebras. We show that $B_0(G)$ is amenable if and only if $G$ is compact and almost Abelian.
On the other extreme, we present many examples of locally compact groups, such as non-compact Abelian groups and infinite solvable groups, for which $B_0(G)$ fails to even have an approximate identity.
\end{abstract}

\noindent {{\sc AMS Subject Classification:}  Primary 	43A30, 46L07, 47B47, 46J10;}
\newline
{{\sc Keywords:} Rajchman algebra, amenability, operator amenability, bounded approximate identity.

\section{Introduction \label{sec:intro}}

Amenability of a group is a fundamental notion in  analysis that was originally introduced by  von Neumann in 1929.
This remarkable property has many equivalent definitions and various
interpretations. For instance, one can think of amenability as the existence of a
translation-invariant averaging condition for a locally compact group.
In 1972, Johnson defined amenable Banach algebras as those satisfying a
certain cohomological property \cite{johnsonM}. The choice of terminology was
inspired by Johnson's well-known theorem demonstrating the equivalence of amenability for a locally compact group and its
convolution algebra \cite{johnsonM}. The concept of amenability turned out to be extremely
fruitful in the theory of  Banach algebras. For example,  Connes~\cite{connes78} and  Haagerup~\cite{haagerup83}  showed that for $C^*$-algebras, amenability and nuclearity coincide.

We recall that a measure $\mu$ in the measure algebra of a locally compact Abelian group is called a Rajchman measure if
$$\lim_{\chi\rightarrow \infty}\hat{\mu}(\chi)=0.$$
The importance of Rajchman measures first became apparent in the study of uniqueness of trigonometric series.  In 1916, Menshov  showed that there are closed sets of Lebesgue measure zero which are not sets of uniqueness~\cite{Menshov}. In his proof, Menshov constructs a probability measure $\mu$ supported in a set of Lebesgue measure zero whose Fourier transform vanishes at infinity. This is one of the earliest examples of measures in $M_0({\mathbb T})$ which do not belong to $L^1({\mathbb T})$. Hewitt and Zuckerman  generalized this result for all non-discrete locally compact Abelian groups~\cite{Hewitt-Zuckerman}.

In his influential work, Eymard \cite{eymard} defined the Fourier and Fourier-Stieltjes algebras of locally compact groups, and studied many of their properties. This in particular gave rise to a natural generalization of Rajchman measures.  The \emph{Rajchman algebra} associated with a locally compact group $G$, denoted by $B_0(G)$, is the set of elements of the Fourier-Stieltjes algebra which vanish at infinity.
It is easy to see that the Rajchman algebra is indeed a Banach subalgebra of the Fourier-Stieltjes algebra.
Note that the Rajchman algebra of a locally compact Abelian group can be identified with the algebra of Rajchman measures on the dual group,
denoted by $M_0(\hat{G})$.

The Rajchman algebra of a locally compact group $G$ is the smallest algebra that contains the coefficient space associated with every $C_0$-representation of $G$. The $C_0$-representations of a locally compact group have been studied in various papers such as \cite{Taylor-vanishing-rep} and \cite{Howe-Moore}.
Understanding such unitary representations and their coefficient spaces, which are certain subspaces of $B_0(G)$, is important
due to their applications in other areas of mathematics such as the theory of automorphic forms and ergodic properties of flows on homogeneous spaces (e.g. see \cite{Moore-flow} and \cite{Shintani}).

The purpose of this article is to investigate $B_0(G)$ as a Banach algebra.
We obtain a characterization of the locally compact groups that have amenable Rajchman algebras.
Namely, we prove that amenability of the Rajchman algebra of a group  is equivalent to the group being compact and almost Abelian, which in turn is
equivalent to the amenability of its Fourier-Stieltjes algebra.
On the other hand, we present examples of groups, such as non-compact Abelian groups and infinite solvable groups, for which Rajchman algebras do not even have a bounded approximate identity. This shows that Rajchman algebras behave widely in terms of amenability, since the existence of a bounded approximate identity
is a necessary condition for amenability or the so-called operator amenability of an algebra.

Many important Banach algebras in harmonic
analysis, e.g. the Fourier and the Fourier-Stieltjes algebras,  are operator spaces as well. Thus, it is natural to also define the notion of
operator amenability in order to take the operator space structure
into account.
%
Combining the famous theorems of Johnson~\cite{johnsonM} and  Ruan~\cite{ruan}, one observes that
for a locally compact group, the  amenability of the
$L^1$-algebra and the operator  amenability of the Fourier
algebra are equivalent. This fact leads one to suspect the analogous relation between the measure
algebra and the Fourier-Stieltjes algebra.
For a locally compact group, it has been
shown that the measure algebra is amenable if and only if the group is discrete and amenable~\cite{DGH}. Since compactness is the dual notion to
discreteness, it is natural to conjecture that the operator amenability of the Fourier-Stieltjes algebra is equivalent to the compactness of the group. In 2007,  Runde and Spronk~\cite{rundes2} found surprising examples of
noncompact operator amenable Fell groups. These examples disproved
the conjecture, and left the characterization of operator
amenability of the Fourier-Stieltjes algebras wide open.
Our investigation leads us to believe that Rajchman algebras of many locally compact groups seem to have as rich a structure as their Fourier-Stieltjes algebras, and can be used as a crucial stepping stone in the study of  the operator amenability of the Fourier-Stieltjes algebras.

\paragraph{}
The remainder of this article is organized as follows.
In Section~\ref{section-background}, we provide the necessary background, review some basics of noncommutative harmonic analysis, and briefly discuss  induced representations.
In Section \ref{section:abelian}, we study $M_0(G)$ for non-discrete locally compact Abelian groups. Among other
results, we show that the Rajchman algebra of a non-discrete locally compact Abelian group does not have an approximate identity, which implies that it is not (operator) amenable. In Section \ref{section:functorial}, we discuss the functorial properties of Rajchman algebras in certain cases such as SIN-groups.
 A locally compact group is called a SIN-group if it has a neighborhood basis of the identity consisting of pre-compact neighborhoods which are invariant under inner automorphisms. This is a very natural class of groups which contains all Abelian, all compact and all discrete groups.
These functorial properties allow us to extend our non-amenability results to all non-compact connected SIN-groups.
The results in Section \ref{section:functorial} are used in Section \ref{section:amenability-of-B0} to establish the sufficient and necessary conditions for the amenability of Rajchman algebras.
In Section \ref{section:amenability-of-B0}, we also present examples of large classes of locally compact groups, such as non-compact connected SIN-groups and infinite solvable groups,
for which Rajchman algebras are not operator amenable.

\section*{Acknowledgements}
The research presented in this paper was part of the authors Ph.D. thesis. I would like to thank my supervisors Brian Forrest and Nico Spronk for their encouragement and invaluable discussions and suggestions. Many thanks to Viktor Losert for bringing Theorem \ref{b0-SL-not-sq-dense} to my attention and to Ebrahim Samei for his suggestions that led to Theorem \ref{thm:solvable}.

\section{Preliminaries}\label{section-background}
Let $G$ be a locally compact group with the Haar measure $\lambda_G$. Let the group algebra of $G$, denoted by $L^1(G)$, be the Lebesgue space
$L^1(G,\lambda_G)$. Recall that $L^1(G)$ equipped with pointwise addition and convolution is a Banach algebra.
Let $M(G)$ be the space of complex-valued Radon measures on $G$. We define  the convolution of two measures $\mu$ and $\nu$ in $M(G)$ to be
$$\int_Gf(z)d(\mu*\nu)(z)=\int_G\int_Gf(xy)d\mu(x)d\nu(y),$$
for every $f$ in $C_c(G)$, the set of compactly supported continuous functions on $G$. The measure algebra $M(G)$ equipped with the total variation norm is in fact a Banach algebra, which contains the group algebra as a closed ideal.

Let ${\cal H}$ be a Hilbert space, and ${\cal U}({\cal H})$ denote the group of unitary operators on ${\cal H}$.
A \emph{continuous unitary representation} of  $G$ on ${\cal H}$ is a group homomorphism $\pi:G\rightarrow {\cal U}({\cal H})$ which is WOT-continuous, i.e.
for every vector $\xi$ and $\eta$ in ${\cal H}$, the function
$$\xi*_\pi\eta:G\rightarrow {\mathbb C}, \quad g\mapsto \langle\pi(g)\xi,\eta\rangle$$
is continuous. Functions of the form $\xi*_\pi\eta$, for vectors $\xi$ and $\eta$ in ${\cal H}$, are called the \emph{coefficient functions}
of $G$ associated with the representation $\pi$.
One can extend $\pi$ to a non-degenerate norm-decreasing $*$-representation of the Banach $*$-algebra $L^1(G)$ to ${\cal B}({\cal H})$ via
$$\langle\pi(f)\xi,\eta\rangle=\int_G f(x)\langle\pi(x)\xi,\eta\rangle dx,$$
for every $f$ in $L^1(G)$ and vectors $\xi$ and $\eta$ in ${\cal H}$. We use the same symbol $\pi$ to denote the $*$-representation extension as well.

For a locally compact group $G$, the \emph{Fourier-Stieltjes algebra} of $G$, denoted by $B(G)$, is the set of all the coefficient functions of $G$ equipped
with pointwise algebra operations.
Eymard \cite{eymard} proved that $B(G)$ can be identified with the dual of the group $C^*$-algebra of $G$.
Moreover, the Fourier-Stieltjes algebra together with the norm from the above duality turns out to be a Banach algebra.
The \emph{Fourier algebra} of $G$, denoted by $A(G)$,  is  the closed subalgebra of the Fourier-Stieltjes algebra generated by its compactly supported elements. Clearly, the Fourier algebra is a subalgebra of $C_0(G)$, the algebra of continuous functions on $G$ which vanish at infinity.  In the special case of  locally compact Abelian groups, one can identify the Fourier and Fourier-Stieltjes algebras with the $L^1$-algebra and the measure algebra of the dual group respectively. In addition to the Fourier and Fourier-Stieltjes algebras, one can define the Rajchman algebra associated with a locally compact group $G$ to be $B_0(G)=B(G)\cap C_0(G)$, which can be identified with the algebra of Rajchman measures on the dual group in the Abelian case. It is easy to see that the Rajchman algebra is indeed a Banach subalgebra of the Fourier-Stieltjes algebra.

Let $\pi$ be a continuous unitary representation of $G$ on a Hilbert space ${\cal H}_\pi$. Let $A_\pi(G)$ denote the closed subspace of $B(G)$
generated by the coefficient functions of $G$ associated with $\pi$, i.e.
$$A_\pi=\overline{\Span_{\mathbb C}\{\xi*_\pi\eta:\xi,\eta\in {\cal H}_\pi\}}^{\|\cdot\|_{B(G)}}.$$
It is easy to see that $A_\pi(G)$ is a left and right translation-invariant closed subspace of $B(G)$. Conversely, by Theorem (3.17) of
\cite{arsac}, any closed subspace of $B(G)$ which is left and right translation-invariant, is of the form $A_\pi(G)$ for some
continuous unitary representation $\pi$.
Elements of $A_\pi(G)$ have certain forms as described below.
\begin{theorem}\label{Api-arsac}\cite{arsac}
The Banach space $A_\pi(G)$ consists of all the elements $u$ in $B(G)$ which are of the form
$$u=\sum_{i=1}^\infty \xi_n*_\pi\eta_n,$$
where $\xi_n$ and $\eta_n$ belong to ${\cal H}_\pi$ and $\sum_{i=1}^\infty\|\xi_i\|\|\eta_i\|<\infty$.
Moreover, for every $u$ in $A_\pi(G)$,
$$\|u\|_{B(G)}=\inf\{\sum_{i=1}^\infty\|\xi_i\|\|\eta_i\|: u \mbox{ represented as above}\},$$
and the infimum is attained.
\end{theorem}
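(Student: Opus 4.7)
The proof breaks into three steps. Step 1 gives the inclusion ``absolutely summable series $\Rightarrow$ element of $A_\pi(G)$'' together with the norm upper bound; Step 2 reduces every element of $A_\pi(G)$ to such a series by passing to the infinite ampliation $\tilde{\pi} = \bigoplus_{n\geq 1}\pi$; and Step 3 upgrades the inequality to equality and gives attainment of the infimum.

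For Step 1, I would first record the basic estimate $\|\xi *_\pi \eta\|_{B(G)} \leq \|\xi\|\|\eta\|$. This drops out of the duality $B(G) = C^*(G)^*$ together with $|\langle \xi *_\pi \eta, f\rangle| = |\langle \pi(f)\xi,\eta\rangle| \leq \|\pi(f)\|\,\|\xi\|\,\|\eta\| \leq \|f\|_{C^*(G)}\|\xi\|\|\eta\|$ for $f \in L^1(G)$. With this in hand, any series $\sum_i \xi_i *_\pi \eta_i$ subject to $\sum \|\xi_i\|\|\eta_i\| < \infty$ converges absolutely in $B(G)$, the limit lies in the closed subspace $A_\pi(G)$, and its $B(G)$-norm is at most $\sum \|\xi_i\|\|\eta_i\|$. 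This shows the right-hand side of the claimed formula is an upper bound for $\|u\|_{B(G)}$.

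For Steps 2 and 3, the central device is the infinite ampliation $\tilde{\pi}$ on $\tilde{\mathcal{H}} = \bigoplus_{n \geq 1} \mathcal{H}_\pi$. For $\Xi = (\xi_i),\, H = (\eta_i) \in \tilde{\mathcal{H}}$, the coefficient $\Xi *_{\tilde{\pi}} H$ coincides with $\sum_i \xi_i *_\pi \eta_i$. By a rescaling trick $\xi_i \mapsto t_i \xi_i$, $\eta_i \mapsto t_i^{-1}\eta_i$ with $t_i = (\|\eta_i\|/\|\xi_i\|)^{1/2}$ one may arrange $\|\xi_i\|=\|\eta_i\|$, which gives $\|\Xi\|\|H\| = \sum \|\xi_i\|\|\eta_i\|$. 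Consequently the infimum over summable series equals the infimum of $\|\Xi\|\|H\|$ over representations of $u$ as a single coefficient function of $\tilde{\pi}$, and moreover $A_\pi(G) = A_{\tilde{\pi}}(G)$ by definition of both sides as closed spans of coefficient functions of (copies of) $\pi$.

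The real content, and the main obstacle, is then to show that each $u \in A_{\tilde{\pi}}(G)$ admits some $\Xi, H \in \tilde{\mathcal{H}}$ with $u = \Xi *_{\tilde{\pi}} H$ and $\|\Xi\|\|H\| = \|u\|_{B(G)}$. I would argue this from the fact that $\tilde{\pi}$ absorbs itself, $\tilde{\pi} \oplus \tilde{\pi} \cong \tilde{\pi}$, so the weak-operator closure $\tilde{\pi}(C^*(G))''$ is a von Neumann algebra acting in standard form on $\tilde{\mathcal{H}}$. Under the natural identification $A_{\tilde{\pi}}(G) \cong \tilde{\pi}(C^*(G))''_*$, the element $u$ corresponds to a normal functional, and standard-form theory (polar decomposition of normal functionals) provides vectors $\Xi, H$ realizing $u$ as the vector functional $\omega_{\Xi,H}$ with exact norm equality $\|\omega_{\Xi,H}\| = \|\Xi\|\|H\|$. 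Combined with Step 2 this yields both the stated series representation for every element of $A_\pi(G)$ and the fact that the infimum in the norm formula is attained. The delicate point is precisely the standard-form/absorption step; without the infinite ampliation one only obtains an infimum, not a minimum.
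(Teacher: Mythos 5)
The paper offers no proof of this statement at all---it is imported verbatim from Arsac \cite{arsac}---so there is no internal argument to compare yours against, and I assess it on its own terms. Steps 1 and 2 are correct and complete: the estimate $\|\xi*_\pi\eta\|_{B(G)}\le\|\xi\|\,\|\eta\|$ from the duality $B(G)=C^*(G)^*$, the resulting absolute convergence and upper bound, the rescaling that arranges $\|\Xi\|\,\|H\|=\sum_i\|\xi_i\|\,\|\eta_i\|$, the Cauchy--Schwarz converse, and the identity $A_\pi(G)=A_{\tilde\pi}(G)$ all check out; this reduction to a single coefficient of the infinite ampliation is, in substance, the standard (Arsac) route.

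Step 3, which you rightly identify as the real content, contains one false assertion and one unjustified one, both repairable. The false one: self-absorption $\tilde\pi\oplus\tilde\pi\cong\tilde\pi$ does \emph{not} put $\tilde{\pi}(C^*(G))''$ in standard form. Take $\pi$ trivial: then $\tilde{\pi}(C^*(G))''=\mathbb{C}1$ acting on $\ell^2$, whose commutant $B(\ell^2)$ is not anti-isomorphic to $\mathbb{C}$, so this is no standard form. What the infinite ampliation actually gives is that the commutant $\tilde{\pi}(C^*(G))'=\pi(C^*(G))'\,\bar{\otimes}\,B(\ell^2)$ is \emph{properly infinite}, and the classical fact you need (it is in Dixmier's book) is: if $M'$ is properly infinite, then every normal state of $M$ is a vector state (splice the vectors of a representation $\varphi=\sum_i\omega_{\xi_i}$ together using isometries in $M'$ with mutually orthogonal ranges), whence by polar decomposition of normal functionals every $\varphi\in M_*$ equals $\omega_{\Xi,H}|_M$ with $\|\varphi\|=\|\Xi\|\,\|H\|$. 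The unjustified one: the ``natural identification'' $A_{\tilde\pi}(G)\cong\bigl(\tilde{\pi}(C^*(G))''\bigr)_*$ is essentially the theorem being proved, so it cannot simply be quoted; it must be established, and non-circularly it follows from three standard facts: (i) the dual of the quotient $*$-homomorphism $C^*(G)\to C^*_{\tilde\pi}(G)$ embeds $C^*_{\tilde\pi}(G)^*$ isometrically into $B(G)$; (ii) by Kaplansky density, restriction from the predual of $\tilde{\pi}(C^*(G))''$ to $C^*_{\tilde\pi}(G)^*$ is isometric; (iii) the resulting isometric image of the predual inside $B(G)$ is a closed subspace which contains every coefficient $\xi*_{\tilde\pi}\eta$ and consists exactly of the norm-convergent sums $\sum_i\xi_i*_{\tilde\pi}\eta_i$, hence equals $A_{\tilde\pi}(G)$. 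With these two repairs your outline becomes a complete and correct proof, and the attainment of the infimum follows exactly as you say.
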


Let ${\cal A}$ be a Banach algebra, and $X$ be a Banach ${\cal A}$-bimodule.
The dual space $X^*$ can be naturally equipped with dual module actions as below.
$$f\cdot a(x)=f(a\cdot x)\ \mbox{ and } \ a\cdot f(x)=f(x\cdot a), \ \mbox{ for }\ f\in X^*, \ a\in{\cal A}.$$
Then $X^*$ is an ${\cal A}$-bimodule, called a \emph{dual bimodule}.
A bounded linear map $D$ from ${\cal A}$ to an ${\cal A}$-bimodule $X$ is called a \emph{derivation} if for all $a$ and $b$ in ${\cal A}$,
$$D(ab)=D(a)\cdot b+a\cdot D(b).$$
A Banach algebra  ${\cal A}$ is called \emph{amenable} if
every continuous derivation $D$ from ${\cal A}$ to a dual ${\cal A}$-bimodule $X^*$ is inner, i.e. it is of the form
$$D:{\cal A}\rightarrow X^*,\  D(a)=a\cdot x-x\cdot a,$$
for an element $x$ of $X^*$.

A Banach algebra ${\cal A}$ is called a \emph{completely contractive
Banach algebra} if ${\cal A}$ has an operator space structure for which
the multiplication map $m:{\cal A}\times{\cal A}\rightarrow {\cal
A}$ is a completely contractive bilinear map.
Similarly, an operator space $X$ is a \emph{completely contractive ${\cal
A}$-bimodule} if
$X$ is an ${\cal A}$-bimodule, and the left and right module actions extend to
completely contractive maps on ${\cal A}\hat{\otimes} X$ and
$X\hat{\otimes} {\cal A}$ respectively.
Operator amenability is defined analogue to amenability. A completely contractive Banach algebra ${\cal A}$ is \emph {operator amenable} if every completely bounded derivation from ${\cal A}$ to a completely contractive dual ${\cal A}$-bimodule is inner.
One can refer to \cite{ruanbook} for more information on operator spaces.
\subsection{Induced representations}\label{section: induced-rep}
The most important method for producing representations is to induce representations for $G$ from representations of its subgroups.
The resulting representation is called an \emph{induced representation}.
Let $H$ be a closed subgroup of a locally compact group $G$, and $q$ be the quotient map from $G$ to $G/H$.
Assume that the quotient space $G/H$ admits a $G$-invariant measure
$\mu$, i.e. $\Delta_G|_H=\Delta_H$. One can always normalize the invariant measure $\mu$ on $G/H$ such that
for every $f$ in $C_c(G)$,
\begin{equation}\label{eq:haar-quotient}
\int_{G/H}\int_H f(xh)dhd\mu(xH)=\int_Gf(x)dx,
\end{equation}
where $dx$ and $dh$ denote the Haar measures of $G$ and $H$ respectively.
Let $\pi$ be a unitary representation
of $H$ on a Hilbert space ${\cal H}_\pi$.  To define the induced representation $\Ind_H^G\pi$ of $G$, we first define the new Hilbert space ${\cal F}$ to be
$\|\cdot\|_{{\cal F}_0}$-completion of the set ${\cal F}_0$, where
$${\cal F}_0=\left\{f\in C(G,{\cal H}_\pi):q(\supp f) \ \mbox{ is
compact}\ \& f(xh)=\pi(h^{-1})f(x) \ \forall x\in G, h\in H\right\},$$
and for every $f$ in ${\cal F}_0$,
$$\|f\|_{{\cal F}_0}^2=\int_{G/H}\|f(x)\|_{{\cal H}_\pi}^2d\mu(xH).$$
The induced representation  $\Ind_H^G\pi$ of $G$ is now defined to be left translations of functions in ${\cal F}$.

We usually work with a total subset of ${\cal F}$ described as follows. Let $C_c(G,{\cal H}_\pi)$ denote the set of continuous compactly supported ${\cal H}_\pi$-valued functions on $G$. Then the mapping
$${\cal P}:C_c(G,{\cal H}_\pi)\rightarrow C(G,{\cal H}_\pi), \qquad ({\cal P}f)(x)=\int_{H}\pi(h)f(xh)dh$$
is well-defined, and  ${\cal P}(C_c(G,{\cal H}_\pi))={\cal F}_0$.
Let $\alpha$ be a function in $C_c(G)$, and $\xi$ be a vector in ${\cal H}_\pi$. We define $f_{\alpha,\xi}$ to be
$$f_{\alpha,\xi}(x)=\alpha(x)\xi\quad \forall x\in G.$$
It is easy to see that $f_{\alpha,\xi}$ is a compactly supported ${\cal H}_\pi$-valued function, and the set
$$\{{\cal P}(f_{\alpha,\xi}): \alpha\in C_c(G), \xi\in {\cal H}\}$$
is a total subset of ${\cal F}$.

The reader may refer to \cite{folland} for details on basic representation theory of locally compact groups, and to \cite{volkerbook} for the theory of amenable
Banach algebras.
\section{Locally compact Abelian groups}\label{section:abelian}
Throughout this section, let $G$ be a locally compact Abelian group, and $M_0(G)$ be its algebra of Rajchman measures.
Let $M_c(G)$ denote the set of all continuous measures in $M(G)$, i.e. the set of complex bounded Radon measures
on $G$ which annihilate every singleton.
Clearly $M_c(G)$ and $M_0(G)$ are closed ideals of $M(G)$. Moreover, $M_0(G)$ is a subset of $M_c(G)$.
For a measurable subset $E$ of $G$, define $M_c(E)$ (respectively $M_0(E)$) to be the set of all measures in $M_c(G)$ (respectively $M_0(G)$) which are supported in the set $E$.

Let $G$ be a locally compact Abelian group,
and $P$ be a subset of $G$.
Let $k(P)$, called the torsion of $P$, denote the smallest positive integer $k$ such
that $\{kx:x\in P\}=\{0_G\}$, if such an integer exists. Otherwise, set
$k(P)=\infty$.
The set $P$ is called \emph{strongly independent} if for any positive integer $N$, any family $\{p_j\}_{j=1}^N$
of distinct elements of $P$, and any family of integers
$\{n_j\}_{j=1}^N$, the equality $\sum_{j=1}^N n_jp_j=0_G$ implies
that $n_j$ is a multiple of $k(P)$ for each $1\leq j\leq N$, unless $k(P)=\infty$, in which case $n_j=0$ for each $1\leq j\leq N$.
A \emph{reduced sum} on a strongly independent subset $P$ of torsion $k(P)$
is a formal expression $\sum_{i\in I}\dot{n_i}p_i$, where $I$ is a possibly empty finite index set,  $p_i$'s are distinct elements of $P$, and
$$0\neq \dot{n_i}\in \mathbb{Z}({\rm  mod }\  k(P)).$$
It is easy to see that every $x$ in ${\rm Gp}(P)$ can be expressed uniquely (up to equivalence) as a reduced sum.

Finally, recall that a closed subspace $B$ of $M(G)$ is called an \emph{$L$-space} if it satisfies the following condition.
\begin{center}
If $\mu,\nu\in  M(G)$, $\nu\in B$, and $\mu\ll\nu$, then $\mu\in B$.
\end{center}
Note that the above definition of an $L$-space is equivalent to the definition of a band, which has been used in \cite{varopoulos}.

In his rather difficult and technical paper \cite{varopoulos2}, Varopoulos proved that if $G$ is a non-discrete metrisable locally compact Abelian group, then there exists a perfect strongly independent subset $P$ of $G$
such that $M_0^+(P)\neq \{0\}$. The following lemma shows that one can assume that such a subset $P$ is compact as well.

\begin{lemma}\label{lem:compact-strongly-indep-M_0nonzero}
Let $G$ be a non-discrete metrisable locally compact Abelian group. Then there exists a compact perfect strongly independent subset $P$ of $G$
such that $M_0^+(P)\neq \{0\}$.
\end{lemma}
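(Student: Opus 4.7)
The plan is to refine Varopoulos's perfect strongly independent set to a compact subset that still carries a nontrivial positive Rajchman measure. Apply Varopoulos's theorem from \cite{varopoulos2} to obtain a perfect strongly independent set $P\subseteq G$ and a nonzero measure $\mu\in M_0^+(P)$. Since $\mu$ is a finite positive Radon measure on the locally compact Hausdorff space $G$, inner regularity yields a compact set $K\subseteq P$ with $\mu(K)>0$, and one can consider the restriction $\nu:=\mu|_K = 1_K\cdot\mu$, which is nonzero and supported in $K$.

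The candidate compact set is then $P':=\supp\nu\subseteq K\subseteq P$. It is compact and nonempty because $\|\nu\|=\mu(K)>0$, and it inherits strong independence from $P$, since any nontrivial relation among finitely many distinct points of $P'$ is already such a relation in $P$. To see that $P'$ is perfect, I would first use the classical fact that $M_0(G)$ is an $L$-space (band) in $M(G)$, which, combined with $\nu\ll\mu\in M_0(G)$, gives $\nu\in M_0^+(G)$. Since the Fourier--Stieltjes transform of a point mass $\delta_a$ is unimodular, no nonzero purely atomic measure lies in $M_0(G)$, so every Rajchman measure is continuous; in particular $\nu$ has no atoms. An isolated point $x$ of $\supp\nu$ would force $\nu(\{x\})>0$, contradicting atom-freeness, so $P'$ has no isolated points and is therefore perfect. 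As $\nu$ is supported on $P'$, this produces the required nonzero element of $M_0^+(P')$.

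The only subtlety hidden in this outline is the appeal to the $L$-space property of $M_0(G)$, which is not stated in the preliminaries and should be cited (see, e.g., the monograph of Graham and McGehee). Everything else is a straightforward manipulation of Radon measures, their supports, and the hereditary nature of strong independence; no further ingredient beyond Varopoulos's theorem, which is invoked as a black box, is required.
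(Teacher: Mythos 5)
Your proposal is correct and follows essentially the same route as the paper's own proof: invoke Varopoulos's theorem, use inner regularity to pass to a compact subset of positive measure, use the $L$-space property of $M_0(G)$ (cited in the paper as \cite{GrahamM0}) to see that the restricted measure is still a (hence continuous) Rajchman measure, and take its support, which is then compact, perfect, and strongly independent. If anything, your write-up is slightly tidier at two points the paper glosses over: you take the support of the restricted measure $\mu|_K$ (the paper writes $\supp(\mu_0)$, for which compactness would not be clear), and you spell out why atom-freeness rules out isolated points of the support.
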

\begin{proof}
By \cite{varopoulos2}  there exists a perfect metrisable strongly independent subset  $P'$ of $G$ which supports a nonzero Rajchman measure $\mu_0$. It is known that $M_0(G)$ is an $L$-space \cite{GrahamM0}. Therefore, without loss of generality
we can assume that $\mu_0$ is a positive measure. Note that $\mu_0(P')>0$ and $\mu_0$ is a Radon
measure, therefore there exists a compact subset $K$ of $P$ with $\mu_0(K)>0$.
But $\mu_0|_K$ belongs to  $M_0(K)=M_0(G)\cap M(K)$, because it is a positive measure supported in $K$ and dominated by $\mu_0$.  Note that $\supp(\mu_0)$ is still a perfect set, because $\mu_0$ is a continuous  measure.
Let $P=\supp(\mu_0)$. Clearly $P$ is a strongly independent set, since it is a subset of the strongly independent set $P'$.
Hence $P$ is a compact perfect strongly independent
subset of $G$ with $M_0(P)\neq \{0\}$.
\end{proof}
Two measures $\mu$ and $\nu$ are said to be \emph{mutually singular}, denoted by $\mu\bot \nu$,
if  there exists a partition $A\cup B$ of $G$ such that $\mu$ is concentrated in $A$ and $\nu$ is concentrated in $B$.

\begin{lemma}\label{lem:Mc2&P}
Let $G$ be a non-discrete metrisable locally compact Abelian group, and
$P$ be a compact perfect strongly independent subset of $G$ such that $M_0(P)\neq \{0\}$. Then
\begin{itemize}
\item[1.] If $x$ and $y$ are distinct elements of $G$ then $M_0(x+P)\perp M_0(y+P)$.
\item[2.] For each $\mu$ in $M_c(G)$, we have $\sum_{x\in G}\mu(x+P)<\infty$.
\end{itemize}
\end{lemma}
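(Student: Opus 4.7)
\emph{Proof proposal.} The plan is to first show that, as a consequence of strong independence, translates $x+P$ and $y+P$ by distinct elements of $G$ are essentially disjoint---more precisely, their intersection is a finite set---and then to deduce both parts from this via continuity of the measures involved.

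For the combinatorial fact, any point of $(x+P)\cap(y+P)$ yields a pair $(p_1,p_2)\in P\times P$ with $p_2-p_1=x-y=:c\neq 0$, and two such pairs $(p_1,p_2)$, $(q_1,q_2)$ satisfy $p_2-p_1-q_2+q_1=0$. A case analysis on the number of distinct elements among $p_1,p_2,q_1,q_2$, together with the strong independence of $P$ and the fact that $k(P)\neq 1$ (as $P$ is perfect), forces the two pairs to essentially coincide; hence $(x+P)\cap(y+P)$ is finite. Part (1) then follows by setting $F=(x+P)\cap(y+P)$, $A=(x+P)\setminus F$, and $B=G\setminus A$: since $M_0(G)\subseteq M_c(G)$, each $\mu\in M_0(x+P)$ is continuous so $\mu(F)=0$ and $\mu$ is concentrated on $A$, while $\nu\in M_0(y+P)$ is supported in $y+P\subseteq B$.

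For part (2), passing to $|\mu|$ lets us assume $\mu\geq 0$. Since $\|\mu\|<\infty$, at most countably many translates $x+P$ carry positive $\mu$-mass; enumerate them as $x_1,x_2,\dots$. By the combinatorial fact the pairwise intersections of the sets $x_i+P$ are finite, and hence null for the continuous measure $\mu$. Inclusion--exclusion therefore collapses to
$$\mu\Bigl(\bigcup_{i=1}^{N}(x_i+P)\Bigr)=\sum_{i=1}^{N}\mu(x_i+P)\leq\|\mu\|,$$
and letting $N\to\infty$ yields $\sum_{x\in G}\mu(x+P)\leq\|\mu\|<\infty$.

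The main obstacle is the case analysis when $k(P)$ is finite---especially $k(P)=2$, where nontrivial coincidences such as $(p_1,p_2)=(a,b)$ and $(q_1,q_2)=(b,a)$ with $2c=0$ are genuinely possible. Here one must apply strong independence to slightly larger configurations (e.g.\ quadruples $p,q,p+c,q+c\in P$) to conclude that any two elements of $P\cap(P+c)$ must be interchanged by translation by $c$, which still caps the intersection at a finite (in fact at most two) number of points, so the argument above goes through uniformly in $k(P)$.
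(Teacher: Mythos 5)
Your proposal is correct and takes essentially the same route as the paper: both rest on the combinatorial fact that strong independence caps $|(x+P)\cap(y+P)|$ at two points (the paper phrases this via uniqueness of reduced sums, with its ``Case 2'' being exactly your torsion-two coincidence where the pair is interchanged by translation), and both then deduce part (1) from the continuity of Rajchman measures killing the finite intersection, and part (2) by bounding sums over finite families of essentially disjoint translates by $|\mu|(G)$.
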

\begin{proof}
\begin{itemize}
\item [1.] First note that if $x$ and $y$ are distinct elements of $G$ then $|(x+P)\cap(y+P)|\leq 2$. Indeed, assume that there exist distinct
elements $z_1$ and $z_2$ in $(x+P)\cap(y+P)$. Then there are $p_1,p_2,p'_1,$ and $p'_2$ in $P$ such that
$$z_1=x+p_1=y+p'_1\mbox{ and } z_2=x+p_2=y+p'_2,$$
which implies that $x-y=p'_1-p_1=p'_2-p_2$. Therefore $x-y$ should be an element of $P-P$.
Note that since $z_1\neq z_2$ and $x\neq y$, we have
$$p_1\neq p_2, \quad p'_1\neq p'_2,\quad p_1\neq p'_1,\quad p_2\neq p'_2.$$
Note that the element $x-y$ in $P-P$ can be expressed uniquely (up to permutation) as a reduced sum on $P$, i.e. one of the following cases happens:

{\bf Case 1:} $p'_1=p'_2$ and $p_1=p_2$, which is a contradiction with $z_1\neq z_2$.

{\bf Case 2:} $p'_1=-p_2$ and $p'_2=-p_1$, and $x-y=-p_1-p_2$ is the unique representation of $x-y$ in $-P-P$. Taking permutations into account,
there are at most two possibilities for $p_1$ and $p_2$, which implies that $$|(x+P)\cap(y+P)|\leq 2.$$
Therefore, every continuous measure $\mu$ on $G$ treats the sets $x+P$ as disjoint sets, i.e. $\mu((x+P)\cap(y+P))=0$ for distinct elements $x$ and $y$ in $G$. Thus $M_c(x+P)\perp M_c(y+P)$, and in particular $M_0(x+P)\perp M_0(y+P)$.

\item[2. ] For any finite number of points $x_1,\ldots,x_n$ in $G$,
$$\sum_{i=1}^n|\mu(x_i+P)|\leq|\mu|(\cup_{i=1}^n(x_i+P))\leq|\mu|(G)<\infty.$$
Hence,
$$\sum_{x\in G}|\mu(x+P)|=\sup_{I\subset G,|I|<\infty}\sum_{x\in I}|\mu(x+P)|\leq |\mu|(G)<\infty.$$
\end{itemize}
\end{proof}
\begin{theorem}\label{thm:M0(Abelian)-no-b.a.i}
Let $G$ be a non-discrete locally compact Abelian group. Then $M_0(G)$ cannot have an approximate identity.
\end{theorem}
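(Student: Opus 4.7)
The plan is a proof by contradiction: suppose $(e_\alpha)_\alpha$ is an approximate identity for $M_0(G)$, and exhibit a fixed nonzero $\mu \in M_0(G)$ for which $e_\alpha * \mu$ cannot converge to $\mu$ in norm. The argument splits into two parts. First, one reduces to the case where $G$ is metrisable, since Lemma~\ref{lem:compact-strongly-indep-M_0nonzero} is only stated in that setting. A natural route is to push the approximate identity forward along the quotient map $q\colon G \to G/K$ for a compact subgroup $K$ chosen (via the structure theorem for LCA groups) so that $G/K$ is non-discrete and metrisable; the push-forward $q_*$ is a contractive algebra homomorphism from $M_0(G)$ into $M_0(G/K)$ because characters of $G/K$ are exactly characters of $G$ trivial on $K$, so an approximate identity for $M_0(G)$ yields one for $M_0(G/K)$.

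Assuming then that $G$ is metrisable, Lemma~\ref{lem:compact-strongly-indep-M_0nonzero} supplies a compact perfect strongly independent $P \subset G$ and a nonzero positive $\mu \in M_0(P)$, so that $\mu(P) = \|\mu\| > 0$. The linear functional $\nu \mapsto \nu(P)$ is $1$-Lipschitz on $M(G)$, so norm-convergence $e_\alpha * \mu \to \mu$ would force $(e_\alpha * \mu)(P) \to \|\mu\|$.

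The heart of the argument is to show instead that $(\nu * \mu)(P) = 0$ for every $\nu \in M_c(G)$, and in particular for $\nu = e_\alpha \in M_0(G) \subset M_c(G)$. By Fubini,
$$(\nu * \mu)(P) \;=\; \int_G \mu(-x + P)\, d\nu(x).$$
Since $\supp \mu \subset P$, one has $\mu(-x + P) = \mu\bigl((-x + P) \cap P\bigr)$. By part~(1) of Lemma~\ref{lem:Mc2&P} the intersection $(-x+P)\cap P$ has at most two points whenever $x \neq 0$, and since $\mu$, being a Rajchman measure, is continuous, it vanishes on every such finite set. Hence $\mu(-x + P) = \|\mu\|\, \mathbf{1}_{\{0\}}(x)$, so
$$(\nu * \mu)(P) \;=\; \|\mu\|\, \nu(\{0\}) \;=\; 0,$$
as $\nu \in M_c(G)$ has no atoms. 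The contradiction $0 = (e_\alpha * \mu)(P) \to \|\mu\| > 0$ then completes the argument.

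The hard part will be the reduction to the metrisable case and the verification that the chosen quotient preserves both non-discreteness and the approximate-identity property; the metrisable argument itself is a one-line Fubini calculation exploiting the ``near-disjointness'' of translates of the strongly independent set $P$ together with the fact that $M_0$-measures are atomless.
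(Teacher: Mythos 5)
Your proposal is correct and follows essentially the same route as the paper: reduce to the metrisable case by pushing an approximate identity forward along the quotient map by a compact subgroup (both you and the paper need that this pushforward maps $M_0(G)$ \emph{onto} $M_0(G/K)$, which the paper asserts without proof), then derive a contradiction by evaluating $e_\alpha * \mu$ against the Varopoulos set $P$. The only cosmetic difference is in how the integral $\int_G \mu(-x+P)\,d\nu(x)$ is shown to vanish: the paper uses the summability statement (part 2 of Lemma \ref{lem:Mc2&P}), which forces $x\mapsto\mu(-x+P)$ to be nonzero only countably often, together with continuity of $\nu$, whereas you use the at-most-two-point bound on $(-x+P)\cap P$ established in the proof of part 1 together with atomlessness of $\mu$; both are valid.
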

\begin{proof}
First assume that $G$ is metrisable, and let $P$ be a compact perfect metrisable strongly independent subset of $G$ such that $M_0(P)\neq \{0\}$.
By Lemma \ref{lem:Mc2&P}, the sum $\sum_{x\in G}\mu(x+P)$ is convergent for any measure $\mu$ in $M_0(G)$.
Therefore only for at most countably many $x$ in $G$, $\mu(x+P)$ is nonzero. Let $\nu$ be an element of $M_0(G)$, and note that the function $x\mapsto \mu(x+P)$ is equal to 0 $\nu$-a.e. Therefore,
\begin{eqnarray*}
\mu*\nu(P)&=&\int_G\int_G\chi_P(x+y)d\mu (x)d\nu(y)=\int_G \mu(-y+P)d\nu(y)=0.
\end{eqnarray*}
Now suppose that $\{\mu_i\}_{i\in I}$ is an approximate identity of $M_0(G)$. Let $\mu_0$ be a nonzero measure in $M_0(P)$. Observe that
$\mu_i*\mu_0(P)=0$ and $\mu_0(P)>0$. This implies that the net $\{\mu_i*\mu_0\}_{i\in I}$ does not converge to $\mu_0$. Therefore $M_0(G)$ does not have an
approximate identity.

For a general non-discrete locally compact Abelian group $G$, let $H$ be a compact subgroup of $G$ such that $G/H$ is metrisable and non-discrete.
Let $q$ denote the quotient map from $G$ to $G/H$. The map $q$ induces an isometric Banach space isomorphism $\check{q}:C_0(G/H)\rightarrow C_0(G)$ via
composition by $q$. Define the map $\phi$ from $M(G)$ to $M(G/H)$ to be
$$\int_{G/H} fd\phi(\mu)=\int_G f\circ q d\mu,\quad \mbox{ for all } f\in C_0(G/H).$$
Clearly $\phi$, which is in fact the dual of $\check{q}$, is a surjective norm-decreasing Banach algebra homomorphism. Moreover, $\phi(M_0(G))=M_0(G/H)$.

Suppose that $\{\mu_i\}_{i\in I}$ is an approximate identity of $M_0(G)$. Then the net
$\{\phi(\mu_i)\}_{i\in I}$ is an approximate identity of $M_0(G/H)$. Indeed,
for any $\nu$ in $M_0(G/H)$ there exists $\mu$ in $M_0(G)$
such that $\phi(\mu)=\nu$, and
\begin{eqnarray*}
\lim_i \|\nu *\phi(\mu_i)-\nu\|_{M_0(G/H)}=\lim_i
\|\phi(\mu *\mu_i-\mu)\|_{M_0(G/H)}\leq \lim_i
\|(\mu*\mu_i-\mu)\|_{M_0(G)}=0.
\end{eqnarray*}
This contradicts the fact that the Rajchman algebra of a metrisable non-discrete locally compact Abelian group cannot have an approximate identity.
\end{proof}

Let $G$ be a non-compact locally compact Abelian group. Then the dual group $\hat{G}$ is non-discrete. Recall that the existence of a bounded approximate identity is a necessary condition for (operator)  amenability of a (completely contractive) Banach algebra.
Hence Theorem \ref{thm:M0(Abelian)-no-b.a.i} implies the following corollary.

\begin{corollary}\label{cor:B_0(abelian-noncomp)Not-opr-amen}
Let $G$ be a non-compact locally compact Abelian group. Then $B_0(G)$ is not (operator) amenable.
\end{corollary}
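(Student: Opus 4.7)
The plan is essentially to reduce this corollary to the already-established Theorem \ref{thm:M0(Abelian)-no-b.a.i} via Pontryagin duality. Concretely, I would proceed in four short steps.

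First, I would invoke the identification, recalled in the preliminaries, that for a locally compact Abelian group $G$ the Fourier--Stieltjes transform gives an isometric Banach algebra isomorphism between $B(G)$ and $M(\hat G)$, and under this isomorphism $B_0(G)$ corresponds exactly to $M_0(\hat G)$. Second, by Pontryagin duality, $G$ is non-compact if and only if $\hat G$ is non-discrete; so under our hypothesis $\hat G$ is a non-discrete locally compact Abelian group.

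Third, I would apply Theorem \ref{thm:M0(Abelian)-no-b.a.i} to $\hat G$, concluding that $M_0(\hat G)$ has no approximate identity, and hence no bounded approximate identity. Fourth, I would invoke the standard fact that any (operator) amenable (completely contractive) Banach algebra must possess a bounded approximate identity. Transporting back along the isomorphism from the first step, $B_0(G)$ therefore cannot be (operator) amenable.

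There is no real obstacle here, since every ingredient is already in place: the hard analytic content lives in Theorem \ref{thm:M0(Abelian)-no-b.a.i} (itself resting on Varopoulos's construction and Lemma \ref{lem:Mc2&P}), while this corollary only packages the duality and the standard approximate-identity necessary condition. The only point worth writing out with a tiny bit of care is that the isomorphism $B_0(G) \cong M_0(\hat G)$ is an isometric algebra isomorphism, so it transports (bounded) approximate identities in both directions; this makes the passage from Theorem \ref{thm:M0(Abelian)-no-b.a.i} to the corollary completely formal.
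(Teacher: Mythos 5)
Your proposal is correct and follows the paper's own argument essentially verbatim: the paper also combines the identification $B_0(G)\cong M_0(\hat G)$, Pontryagin duality ($G$ non-compact iff $\hat G$ non-discrete), Theorem \ref{thm:M0(Abelian)-no-b.a.i}, and the standard fact that (operator) amenability forces a bounded approximate identity. Nothing is missing, and no genuinely different route is taken.
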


\section{Functorial properties of $B_0(G)$}\label{section:functorial}
Let $G$ be a locally compact group and $H$ be a closed subgroup of
$G$. Then the set of restrictions $B_0(G)|_H$ is a  subspace
of $B_0(H)$, which we will show is also closed. The extension problem asks whether every function in
$B_0(H)$ has an extension  in $B_0(G)$.
It has been proved that for every closed subgroup $H$ of a locally compact group $G$, one has $A(G)|_H=A(H)$ (see \cite{takesaki-masamichi} or \cite{herz}). In fact, every
function in the Fourier algebra of $H$ can be extended to a function of the same norm in the Fourier algebra of $G$. Unfortunately, the analogue of this result
does not hold  in general for the Fourier-Stieltjes algebra. However, for a locally compact group $G$ and a closed subgroup $H$, it has
been shown that $B(H)=B(G)|_H$ if $G$ is Abelian, or if $H$ is open,
or compact, or the connected component of the identity or the center
of $G$.
Moreover, Cowling and Rodway \cite{cowlingrodway} answered the extension problem of the Fourier-Stieltjes algebras in affirmative for the case of SIN-groups.
In this section, we prove similar results for Rajchman algebras.
The proof of Theorem \ref{thm1} is motivated by
that of Cowling and Rodway \cite{cowlingrodway}.

Let us first observe that the restriction map $r:B_0(G)\rightarrow B_0(H)$ is not surjective in the case when
$G=ax+b$ and  $H=\left\{\left(
                                \begin{array}{cc}
                                  1 & b \\
                                  0 & 1 \\
                                \end{array}
                              \right):
b\in {\mathbb R}\right\}\simeq {\mathbb R}$. Indeed,  Khalil \cite{Khalil} showed that $B_0(G)=A(G)$. Using the functorial properties of the Fourier algebra together with the fact that
$B_0({\mathbb R})\neq A({\mathbb R})$, it is clear that $B_0(G)|_H\neq B_0(H)$. In a consequent paper, we show that the restriction map between the Rajchman algebras of  $\SL$ and its certain subgroups fail to be surjective as well.

Proposition \ref{prop:B_0(G)|_H-inside-B_0(H)} follows form Proposition 2.10 of \cite{arsac}. In order to be self-contained, we include the proof in the present paper.
\begin{proposition}\label{prop:B_0(G)|_H-inside-B_0(H)}
Let $H$ be a closed subgroup of a locally compact group $G$. Then $B_0(G)|_H$ is a closed subspace of $B_0(H)$, and for each $u$ in $B_0(G)$,
$$\|u|_H\|_{B_0(H)}\leq\|u\|_{B_0(G)}.$$
\end{proposition}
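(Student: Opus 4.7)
The plan is to reduce the problem to Arsac's structure theorem for closed translation-invariant subspaces of $B(G)$ (Theorem 3.17 of \cite{arsac}, quoted in the excerpt) together with Theorem \ref{Api-arsac}. Since $B_0(G) = B(G) \cap C_0(G)$ is a closed two-sided translation-invariant subspace of $B(G)$ (translation-invariance is inherited from $C_0(G)$, and norm-closedness follows from the inequality $\|u\|_\infty \leq \|u\|_{B(G)}$), there exists a continuous unitary representation $\pi_0$ of $G$ with $B_0(G) = A_{\pi_0}(G)$. Once this identification is in place, restriction becomes transparent, because each generator $\xi *_{\pi_0} \eta$ on $G$ restricts to $\xi *_{\pi_0|_H} \eta$ on $H$ relative to the (still continuous unitary) restricted representation $\pi_0|_H$.

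For the norm inequality, I would apply Theorem \ref{Api-arsac} to write any $u \in B_0(G) = A_{\pi_0}(G)$ as $u = \sum_{i=1}^{\infty} \xi_i *_{\pi_0} \eta_i$ with $\sum_{i} \|\xi_i\|\|\eta_i\| = \|u\|_{B(G)}$, the infimum being attained. The restriction $u|_H = \sum_{i} \xi_i *_{\pi_0|_H} \eta_i$ is then a legitimate Arsac-series representation of $u|_H$ relative to $\pi_0|_H$, so a second application of Theorem \ref{Api-arsac}, this time on $H$, yields $\|u|_H\|_{B(H)} \leq \sum_{i} \|\xi_i\|\|\eta_i\| = \|u\|_{B(G)}$. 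Since $H$ is closed in $G$, every compact subset of $H$ is compact in $G$, so $u \in C_0(G)$ forces $u|_H \in C_0(H)$; hence $u|_H \in B_0(H)$, and since the $B_0$-norms are the inherited $B$-norms we conclude $\|u|_H\|_{B_0(H)} \leq \|u\|_{B_0(G)}$.

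For closedness of $B_0(G)|_H$ in $B_0(H)$, I plan to identify $B_0(G)|_H$ with $A_{\pi_0|_H}(H)$, which is closed in $B(H)$ by its very definition as the $\|\cdot\|_{B(H)}$-closure of the span of coefficient functions of $\pi_0|_H$. The inclusion $B_0(G)|_H \subseteq A_{\pi_0|_H}(H)$ is immediate from the preceding paragraph. For the reverse inclusion, given $v \in A_{\pi_0|_H}(H)$, use Theorem \ref{Api-arsac} to write $v = \sum_i \xi_i *_{\pi_0|_H} \eta_i$ with $\sum_i \|\xi_i\|\|\eta_i\| < \infty$ (and $\xi_i, \eta_i \in \mathcal{H}_{\pi_0}$), and then define $u$ on $G$ by $u(g) = \sum_i \langle \pi_0(g)\xi_i, \eta_i \rangle$. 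Absolute summability guarantees via Theorem \ref{Api-arsac} that $u \in A_{\pi_0}(G) = B_0(G)$, while obviously $u|_H = v$. Hence $B_0(G)|_H = A_{\pi_0|_H}(H)$ is closed in $B(H)$, and being contained in $B_0(H)$ it is also closed there.

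The main substantive obstacle is the opening identification $B_0(G) = A_{\pi_0}(G)$, which imports the nontrivial Arsac structure theorem; given that input, everything else is a clean unpacking of the series representation of Theorem \ref{Api-arsac}. The only other subtle topological point is the passage from vanishing at infinity on $G$ to vanishing at infinity on $H$, which is precisely where the hypothesis that $H$ be closed in $G$ is used.
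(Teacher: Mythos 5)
Your proposal is correct and follows essentially the same route as the paper's proof: invoke Arsac's theorem to realize $B_0(G)=A_{\pi_0}(G)$, use the series representation of Theorem \ref{Api-arsac} to identify $B_0(G)|_H$ with $A_{\pi_0|_H}(H)$ (hence closed), and extend/restrict series term by term for the two inclusions and the norm bound. The only cosmetic differences are that you derive the norm inequality from the attained series infimum for $\pi_0$ itself where the paper uses a single norm-attaining coefficient representation, and you make explicit the $C_0(G)\to C_0(H)$ passage that the paper leaves implicit (note the precise reason is that $K\cap H$ is compact for compact $K\subseteq G$ when $H$ is closed, not that compact subsets of $H$ are compact in $G$).
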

\begin{proof}
Note that $B_0(G)$ is a translation-invariant closed subspace of $B(G)$. Therefore there
exists a unitary representation $\pi$ of $G$ such that $B_0(G)=A_\pi(G)$.
The space  $B_0(G)|_H$ is clearly a subspace of $B_0(H)$, since any representation of
$G$ restricts to a representation of $H$.  Next, observe  that $A_\pi(G)|_H=A_{\pi|_H}(H)$.
Indeed, let $u$ be an element of $B_0(G)$. Then by Theorem \ref{Api-arsac} (ii)
$$u=\sum_{i=1}^\infty \xi_n*_\pi\eta_n,$$
where $\xi_n$ and $\eta_n$ belong to ${\cal H}_\pi$ and $\sum_{i=1}^\infty\|\xi_i\|\|\eta_i\|<\infty$. Therefore
$$u|_H=\sum_{i=1}^\infty \xi_n*_{\pi|_H}\eta_n,$$
where $\pi|_H$ is the restriction of the representation $\pi$ from $G$ to $H$.
This implies that $u|_H$ belongs to $A_{\pi|_H}(H)$. On the other hand, let $v$ be an element of $A_{\pi|_H}(H)$. Applying Theorem \ref{Api-arsac}
(ii) again, we get
$$v=\sum_{i=1}^\infty \xi'_n*_{\pi|_H}\eta'_n,$$
where $\xi'_n$ and $\eta'_n$ belong to ${\cal H}_\pi$ and $\sum_{i=1}^\infty\|\xi'_i\|\|\eta'_i\|<\infty$. Define
$$w=\sum_{i=1}^\infty \xi'_n*_{\pi}\eta'_n.$$
Then $w$ belongs to $A_\pi(G)$ and $w|_H=v$. Hence $A_\pi(G)|_H=A_{\pi|_H}(H)$, and the latter is a closed subspace of $B(G)$ by definition.

Finally, for every $u$ in $B(G)$, we can find a representation
$u(x)=\langle \pi (x)\xi,\eta \rangle$ such that
$\|u\|_{B(G)}=\|\xi\|\|\eta\|$. Then $u|_H(h)=\langle\pi|_H
(h)\xi,\eta\rangle$, and $\|u|_H\|_{B(H)}\leq
\|\xi\|\|\eta\|=\|u\|_{B(G)}$.
\end{proof}

\begin{lemma}\label{lem:C0-indues-to-C0-rep}
Let $H$ be a closed subgroup of a locally compact group $G$ such that $\Delta_G|_H=\Delta_H$, and
$\pi$ be a representation of $H$. If  $A_\pi(H)$ is a subset of  $C_0(H)$ then $A_{Ind_\pi}(G)$ is a subset of $C_0(G)$ as well.
\end{lemma}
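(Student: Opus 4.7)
The plan is to show that matrix coefficients of $\Ind_H^G\pi$ arising from a total subset of the induced Hilbert space $\mathcal{F}$ already lie in $C_0(G)$. Since $A_{\Ind_\pi}(G)$ is by definition the $B(G)$-closed linear span of such coefficients, and since $\|u\|_\infty\le\|u\|_{B(G)}$ makes $C_0(G)$ a closed subspace of $B(G)$, this inclusion will give the desired conclusion. The natural total set to use is $\{\mathcal{P}(f_{\alpha,\xi}):\alpha\in C_c(G),\,\xi\in\mathcal{H}_\pi\}$ from the preliminaries.

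For $\alpha,\beta\in C_c(G)$ and $\xi,\eta\in\mathcal{H}_\pi$ I would first unfold
\[
\Phi(g):=\langle \Ind_H^G\pi(g)\,\mathcal{P}(f_{\alpha,\xi}),\,\mathcal{P}(f_{\beta,\eta})\rangle_{\mathcal{F}}
\]
using the inner product on $\mathcal{F}$ and the explicit form of $\mathcal{P}$. The substitution $t=(h')^{-1}h$ in the resulting double $H$-integral isolates the matrix coefficient $u(t):=\langle\pi(t)\xi,\eta\rangle$, which by hypothesis lies in $A_\pi(H)\subseteq C_0(H)$. Collapsing $\int_{G/H}\int_H$ into $\int_G$ via the quotient integration formula \eqref{eq:haar-quotient}, then applying Fubini and the substitution $x=g^{-1}y$, yields
\[
\Phi(g)=\int_G\overline{\beta(gx)}\,V(x)\,dx,\qquad V(x):=\int_H u(t)\,\alpha(xt)\,dt.
\]

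The crux is to show $V\in C_0(G)$. Continuity of $V$ is routine from $\alpha\in C_c(G)$ by dominated convergence. For vanishing at infinity, fix $\epsilon>0$, pick a compact set $C\subseteq H$ with $|u|<\epsilon$ off $C$, and set $K:=\supp\alpha$. Split $V=V_1+V_2$, where $V_1$ integrates over $C$ and $V_2$ over $H\setminus C$. Then $V_1$ is supported in the compact set $K\cdot C^{-1}\subseteq G$, while $|V_2(x)|\le\epsilon\int_H|\alpha(xt)|\,dt$. For the uniform bound on the latter, if $x^{-1}K\cap H$ is nonempty and $t_0$ is a point in it, then for any other $t$ in the slice one has $t_0^{-1}t=(xt_0)^{-1}(xt)\in K^{-1}K\cap H$, so $t_0^{-1}(x^{-1}K\cap H)\subseteq K^{-1}K\cap H$. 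Since $K^{-1}K\cap H$ is compact in $H$, left-invariance of $\lambda_H$ gives $\int_H|\alpha(xt)|\,dt\leq\|\alpha\|_\infty\,\lambda_H(K^{-1}K\cap H)$ uniformly in $x\in G$. Hence $|V|\lesssim\epsilon$ outside a compact subset of $G$, proving $V\in C_0(G)$.

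Finally, the substitution $z=gx$ rewrites the coefficient as $\Phi(g)=\int_G\overline{\beta(z)}V(g^{-1}z)\,dz$. Since $\beta$ has compact support and $V\in C_0(G)$, one has $V(g^{-1}z)\to 0$ uniformly on $\supp\beta$ as $g\to\infty$ in $G$, so $\Phi(g)\to 0$. The main technical obstacle in this plan is the uniform measure estimate on the slices $x^{-1}K\cap H$; the rest is a careful but routine unfolding of definitions.
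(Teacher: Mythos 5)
Your proposal is correct and follows essentially the same route as the paper: both reduce to coefficients of the total set $\{\mathcal{P}f_{\alpha,\xi}:\alpha\in C_c(G),\ \xi\in\mathcal{H}_\pi\}$, derive the same double-integral formula for such coefficients, and split the $H$-integration into a compact piece (which forces compact support in the group variable) plus a tail made uniformly small by the $C_0(H)$ hypothesis together with the uniform bound on $x\mapsto\int_H|\alpha(xt)|\,dt$. The only difference is packaging: you factor through the auxiliary function $V\in C_0(G)$ and then integrate against $\beta$, and you prove the uniform slice bound by hand via the inclusion $t_0^{-1}(x^{-1}K\cap H)\subseteq K^{-1}K\cap H$, whereas the paper runs the same estimate directly on the double integral and simply cites that $P|\alpha|$ lies in $C_c(G/H)$.
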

\begin{proof}
Let $\mu$ be the nonzero positive invariant measure of $G/H$ normalized such that
for every $f$ in $C_c(G)$,
\begin{equation}\label{eq:haar-quotient}
\int_{G/H}\int_H f(xh)dhd\mu(xH)=\int_Gf(x)dx.
\end{equation}
For any $\xi$ in ${\cal H}_\pi$ and $v$ in $C_c(G)$, we define the compactly supported function
$f_{v,\xi}:G\rightarrow {\cal H}_\pi$, $x\mapsto v(x)\xi$.
Let $\eta$ and $w$ be elements of ${\cal H}_\pi$ and $C_c(G)$ respectively,
and compute the coefficient function of $\Ind_H^G\pi$ corresponding to ${\cal P}f_{v,\xi}$ and ${\cal P}f_{w,\eta}$.
\begin{eqnarray}
\langle\Ind_\pi(x){\cal P}f_{v,\xi},{\cal P}f_{w,\eta}\rangle_{{\cal
F}_0}&=&\int_{G/H}\langle\Ind_\pi(x){\cal P}f_{v,\xi}(g),{\cal P}f_{w,\eta}(g)\rangle_{{\cal
H}_\pi}d\mu (gH)\nonumber\\&=& \int_{G/H}\langle \int_H
\pi(h)(v(x^{-1}gh)\xi)dh,\int_H \pi(h')(w(gh')\eta)dh'\rangle_{{\cal
H}_\pi}d\mu (gH)\nonumber\\&=& \int_{G/H} \int_H \int_H
v(x^{-1}gh)w(gh')\langle\pi(h)\xi,\pi(h')\eta\rangle_{{\cal
H}_\pi}dhdh'd\mu (gH)\nonumber\\&=&\int_{G/H} \int_H \int_H
v(x^{-1}gh)w(gh')\langle\pi(h'^{-1}h)\xi,\eta\rangle_{{\cal
H}_\pi}dhdh'd\mu (gH)\nonumber\\&=& \int_{G/H} \int_H \int_H
v(x^{-1}gh'h)w(gh')\langle\pi(h)\xi,\eta\rangle_{{\cal
H}_\pi}dhdh'd\mu (gH)\nonumber\\&=&\int_G\int_H
v(x^{-1}gh)w(g)\xi*_\pi\eta(h)dhdg,\label{eq:induced-rep-coeff}
\end{eqnarray}
where in the last equality, we used the normalized relation
stated in Equation (\ref{eq:haar-quotient}).
Observe that for each $x$ in $G$, the following integral is bounded.
\begin{eqnarray*}
\int_G\int_H|v(x^{-1}gh)w(g)|dhdg&=&\int_G P|v|(x^{-1}gH)|w|(g)dg
\leq \|P|v|\|_{L^\infty(G/H)} \|w\|_{L^1(G)}<\infty,
\end{eqnarray*}
since $P|v|$ belongs to $C_c(G/H)$.

Let
$\epsilon>0$ be given, and define
$\epsilon_1=\frac{\epsilon}{\|P|v|\|_{L^\infty(G/H)} \|w\|_{L^1(G)}}$. Since $\xi*_\pi\eta$ belongs to $C_0(H)$,
there exists a compact subset $K_1$ of $H$ such that
$|\xi*_\pi\eta(h)|<\epsilon_1$ for every $h$ in $H\setminus K_1$. Let $K=[\supp(w)]K_1[\supp(v)]^{-1}$, and note
that $K$ is compact. It is easy to see that if $x$ and $h$ are elements of  $G\setminus K$ and $K_1$ respectively, then  $v(x^{-1}gh)w(g)=0$ for
every $g$ in $G$. Hence, for each $x$ in $G\setminus K$,
\begin{eqnarray*}
|{\cal P}f_{v,\xi}*_{\Ind_\pi}{\cal P}f_{w,\eta}(x)|&=&|\int_G\int_H v(x^{-1}gh)w(g)\xi*_\pi\eta(h)dhdg|\\
&=&|\int_G\int_{H\setminus K_1} v(x^{-1}gh)w(g)\xi*_\pi\eta(h)dhdg|\\
&\leq&
\int_G\int_{H\setminus K_1} |v(x^{-1}gh)w(g)\xi*_\pi\eta(h)|dhdg\\
&\leq& \epsilon_1 \int_G\int_{H} |v(x^{-1}gh)w(g)|dhdg\leq \epsilon,
\end{eqnarray*}
which implies that ${\cal P}f_{v,\xi}*_{\Ind_\pi}{\cal P}f_{w,\eta}$ belongs to $C_0(G)$. Finally note that the set
$\{{\cal P}f_{v,\xi}: \ v\in C_c(G),\xi\in {\cal H}_\pi\}$ forms a total subset of the Hilbert space of the representation $\Ind_\pi$, therefore
$$A_{\Ind_\pi}(G)=\overline{\Span}^{\|\cdot\|_{B(G)}}\left\{{\cal P}f_{v,\xi}*_{\Ind_\pi}{\cal P}f_{w,\eta}: \ v,w\in C_c(G), \xi,\eta\in {\cal H}_\pi\right\}.$$
Hence, $A_{\Ind_\pi}(G)$ is a subset of $C_0(G)$ as well.
\end{proof}
A locally compact group is called a SIN-group if it has a neighborhood basis of the identity consisting of pre-compact neighborhoods which are invariant under inner automorphisms, i.e. their characteristic functions are central.  Recall that a function $\nu :G\rightarrow \mathbb{C}$ is called \emph{central} if for
every $g$ and $g'$ in $G$, we have
$\nu(gg')=\nu(g'g)$. Examples of SIN-groups are Abelian groups, compact groups, and discrete groups. It is easy to see that SIN-groups are unimodular.
Moreover, every closed subgroup of a SIN-group is again a SIN-group.
\begin{theorem}\label{thm1}
Let  $H$ be a closed subgroup of a SIN-group $G$. Then the restriction map $r:B_0(G)\rightarrow B_0(H)$ is surjective.
\end{theorem}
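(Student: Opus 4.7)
The plan is to extend functions from $H$ to $G$ via induced representations, following the approach of Cowling and Rodway \cite{cowlingrodway}. Since $B_0(H)$ is a translation-invariant closed subspace of $B(H)$, Arsac's theorem (Theorem 3.17 of \cite{arsac}, already cited in the preliminaries) produces a unitary representation $\sigma$ of $H$ with $B_0(H) = A_\sigma(H) \subseteq C_0(H)$. By Theorem~\ref{Api-arsac}, every $u \in B_0(H)$ admits a decomposition $u = \sum_n \xi_n *_\sigma \eta_n$ with $\sum_n \|\xi_n\|\|\eta_n\| < \infty$.

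Next, SIN-groups are unimodular and the SIN property passes to closed subgroups, so both $G$ and $H$ are unimodular; in particular $\Delta_G|_H = \Delta_H$, so the induced representation $\pi := \Ind_H^G \sigma$ is well-defined on the space ${\cal F}$ of Section~\ref{section: induced-rep}. Applying Lemma~\ref{lem:C0-indues-to-C0-rep} to the $C_0$-representation $\sigma$ yields $A_\pi(G) \subseteq C_0(G) \cap B(G) = B_0(G)$. Hence $r(A_\pi(G)) \subseteq B_0(H)$, and Proposition~\ref{prop:B_0(G)|_H-inside-B_0(H)} guarantees that $r(B_0(G))$ is closed in $B_0(H)$. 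It will therefore suffice to show that $r(A_\pi(G))$ is norm-dense in $A_\sigma(H) = B_0(H)$.

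For density, by linearity and continuity it is enough to approximate a single coefficient $u = \xi *_\sigma \eta$ in $B(H)$-norm. For each conjugation-invariant pre-compact neighborhood $V$ of $e$ in $G$ (which exists because $G$ is SIN), choose a suitably normalized test function $v_V \in C_c(G)$ supported near $V$ and form
$$\tilde u_V(x) := \langle \pi(x)\, {\cal P}f_{v_V,\xi},\, {\cal P}f_{v_V,\eta}\rangle_{\cal F} \;\in\; A_\pi(G).$$
Applying formula~(\ref{eq:induced-rep-coeff}) and two successive applications of centrality of $\chi_V$ (the SIN property, which gives $\chi_V(ab) = \chi_V(ba)$), together with unimodularity of $H$, the restriction $\tilde u_V|_H$ should collapse, after suitable substitutions, into
$$\tilde u_V(h_0) \;=\; \int_H F_V(k)\, u(kh_0)\, dk, \qquad h_0 \in H,$$
where $F_V$ is the restriction to $H$ of the auxiliary function $k \mapsto \int_G v_V(kg)\overline{v_V(g)}\,dg$ on $G$. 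As $V$ shrinks, $\{F_V\}$ becomes an approximate identity on $H$, so the right-hand side equals $\xi *_\sigma \sigma(\tilde F_V)\eta$ for a quantity converging to $\eta$ in ${\cal H}_\sigma$ by non-degeneracy of $\sigma$. This forces $\tilde u_V|_H \to u$ in $B(H)$-norm.

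Combining density with the closedness of $r(B_0(G))$ in $B_0(H)$ yields $r(B_0(G)) = B_0(H)$, giving the surjectivity claim. The principal obstacle is the centrality manipulation in the third paragraph: one must correctly carry out the substitutions $\chi_V(h_0^{-1}gh) = \chi_V(ghh_0^{-1}) = \chi_V(hh_0^{-1}g)$ inside the double integral of~(\ref{eq:induced-rep-coeff}), and then verify that the resulting family $\{F_V\}$ genuinely serves as an approximate identity for the representation $\sigma$ in the sense required to obtain $B(H)$-norm convergence (as opposed to merely pointwise or uniform convergence of the coefficient functions).
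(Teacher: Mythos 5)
Your proposal follows the paper's own proof essentially step for step: the paper likewise takes $\pi_0$ with $B_0(H)=A_{\pi_0}(H)$ from Arsac's theorem, induces it to $G$, invokes Lemma \ref{lem:C0-indues-to-C0-rep} and Proposition \ref{prop:B_0(G)|_H-inside-B_0(H)} to reduce surjectivity to density of restricted coefficient functions of $\Ind_{\pi_0}$, and then runs exactly the computation you sketch. The step you flag as the principal obstacle does go through and yields precisely your predicted formula: the paper's chain of equalities gives $F_\alpha|_H(h')=\langle\pi_0(h')\xi,\pi_0(u_\alpha)^*\eta\rangle=\int_H u_\alpha(k)\,u(kh')\,dk$, where $u_\alpha$ is the normalized restriction to $H$ of $\check{v_\alpha}*v_\alpha$, and $B(H)$-norm convergence follows from the estimate $\|\xi\|\,\|\pi_0(u_\alpha)^*\eta-\eta\|\to 0$, since $\{u_\alpha\}$ is a bounded approximate identity for $L^1(H)$. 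One concrete correction is needed, however: the centrality you must use is that of the test functions $v_V$ themselves, not of $\chi_V$. The integrand contains $v_V(h_0^{-1}gh)$, and invariance of the neighborhood $V$ alone does not allow you to permute the arguments of an arbitrary $v_V$ supported near $V$. The paper therefore chooses the $v_\alpha$ to be nonnegative \emph{continuous central} functions supported in small invariant neighborhoods; such functions exist exactly because $G$ is SIN (for instance, convolutions of characteristic functions of invariant pre-compact neighborhoods are continuous and central, using unimodularity). With that choice, a single application of centrality, $v'_\alpha(h'^{-1}gh)=v'_\alpha(ghh'^{-1})$, followed by the unimodular changes of variables $h\mapsto hh'$ in $H$ and $g\mapsto g^{-1}$ in $G$, completes the collapse, and your argument then coincides with the paper's.
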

\begin{proof}
Let $dg$ and $dh$ denote the Haar measures of $G$ and $H$ respectively. Note
that $G/H$ admits a $G$-invariant measure $d\dot{g}$, since $\Delta_G|_H=\Delta_H$.
Moreover assume that these measures are
normalized so that Equation (\ref{eq:haar-quotient}) holds.
Let $\pi_0$ be a representation of $H$
satisfying $B_0(H)=A_{\pi_0}(H)$, and let $F_{\pi_0}(H)$ be the set of all the linear combinations of the coefficient functions of $\pi_0$.
By Lemma \ref{lem:C0-indues-to-C0-rep}, every coefficient function of the induced representation $\Ind_{\pi_0}$
of $G$ vanishes at infinity. Hence by Proposition \ref{prop:B_0(G)|_H-inside-B_0(H)} to prove that $r$ is surjective, it is enough to show that $A_{\Ind_{\pi_0}}(G)|_H$ is dense in $F_{\pi_0}(H)$, since $F_{\pi_0}(H)$ is in turn dense in $B_0(H)$.

Let  $\{{\cal U}_\alpha\}_\alpha$  be a basis of relatively compact neighborhoods of the identity in $G$.
For each $\alpha$, let ${\cal V}_\alpha$ be a relatively compact neighborhood of $e_G$ such that ${\cal V}_\alpha^{-1}{\cal V}_\alpha\subseteq {\cal U}_\alpha$.
Since $G$ is a SIN-group, there exists a net $\{v_\alpha\}_\alpha$ of nonnegative continuous central functions which are not identically zero and satisfy $\supp(v_\alpha)\subseteq {\cal V}_\alpha$. Observe that for each $\alpha$, the function $\check{v_{\alpha}}*v_{\alpha}$ is a nonnegative continuous
function supported in ${\cal U}_\alpha$. Moreover,
$$\check{v_\alpha}* v_\alpha(e)=\int_Gv_\alpha(y^{-1})v_\alpha(y^{-1}e)dy=\int_Gv_\alpha(y)v_\alpha(y)dy=\|v_\alpha\|_2^2>0.$$
Let $u_\alpha=\frac{\check{v_\alpha}*v_\alpha|_H}{\|\check{v_\alpha}*v_\alpha|_H\|_{L^1(H)}}$ for each $\alpha$. Then $\{{\cal U}_\alpha\cap H\}_\alpha$  is a basis of relatively compact neighborhoods of the identity in $H$, and the net $\{u_\alpha\}_\alpha$ is a bounded approximate identity of $L^1(H)$ consisting of nonnegative continuous functions (see Theorem
A.1.8. of \cite{volkerbook}). It is clear that for every function $g$ in $C(H)$, and for every vector $\vartheta$ in ${\cal H}_\pi$,  the following equations hold.
\begin{eqnarray}\label{eq:b.a.i.converge}
\lim_\alpha \int_H u_\alpha(h)g(h)dh= g(e)\nonumber\\
\lim_\alpha\|\pi_0(u_\alpha)^*\vartheta-\vartheta\|= 0.
\end{eqnarray}

Fix an element $f=\xi*_{\pi_0}\eta$ of $F_{\pi_0}(H)$, and consider the function $F_{\alpha}$ defined as in Equation \ref{eq:induced-rep-coeff}.
\begin{equation}\label{eq-Ind-coeff}
F_{\alpha}(x)=\langle\Ind_{\pi_0}(x){\cal P}f_{v'_\alpha,\xi},{\cal P}f_{v'_\alpha,\eta}\rangle_{{\cal
F}_0}=\int_G\int_H v'_\alpha(x^{-1}gh)v'_\alpha(g)\xi*_{\pi_0}\eta(h)dhdg,\nonumber
\end{equation}
where $v'_\alpha=\frac{1}{\sqrt{\|\check{v_\alpha}*v_\alpha|_H\|_{L^1(H)}}}v_\alpha$.
For every $h'$ in $H$, we have
\begin{eqnarray*}
F_{\alpha}|_H(h')&=&\int_G\int_H v'_{\alpha}(h'^{-1}gh)v'_\alpha(g)\xi*_{\pi_0}\eta(h)dhdg\\
&=&\int_G\int_H v'_\alpha(ghh'^{-1})v'_\alpha(g)\xi*_{\pi_0}\eta(h)dhdg\\
&=&\int_G\int_H v'_\alpha(gh)v'_\alpha(g)\xi*_{\pi_0}\eta(hh')dhdg\\
&=&\int_G\int_Hv'_\alpha(g^{-1}h)v'_\alpha(g^{-1})\langle\pi_0(h')\xi,\pi_0(h^{-1})\eta\rangle dhdg\\
&=&\langle\pi_0(h')\xi,\pi_0( \check{v'_\alpha}*v'_{\alpha}|_H)^*\eta\rangle\\
&=&\langle\pi_0(h')\xi,\pi_0(u_\alpha)^*\eta\rangle,\\
\end{eqnarray*}
where we used the unimodularity of $G$ and $H$. Note that
$$\|f-F_\alpha|_H\|_{B(H)}=\|\xi*_{\pi_0}\eta-\xi*_{\pi_0}\pi_0(u_\alpha)^*\eta\|_{B(H)}
\leq\|\xi\|\|\eta-\pi_0(u_\alpha)^*\eta\|\rightarrow 0,$$
by (\ref{eq:b.a.i.converge}). Hence $A_{\Ind_{\pi_0}}(G)|_H$ is dense in $F_{\pi_0}(H)$ and we are done.
\end{proof}

\begin{theorem}\label{thm2}
Let $G$ be a locally compact group. Then
the restriction map $r:B_0(G)\rightarrow B_0(H)$ is surjective in the following cases.
\begin{enumerate}
\item  When $H$ is the center of $G$.
\item When $H$ is an open subgroup of $G$
\end{enumerate}
\end{theorem}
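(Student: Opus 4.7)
The plan is to handle both cases by constructing, for a representation $\pi_0$ of $H$ with $A_{\pi_0}(H)=B_0(H)$, sufficiently many coefficient functions of $\Ind_H^G\pi_0$ whose restrictions to $H$ exhaust (or densely approximate) elements of $A_{\pi_0}(H)$. A crucial preliminary in each case is that the coefficient functions of the induced representation land in $C_0(G)$, which is supplied by Lemma~\ref{lem:C0-indues-to-C0-rep} once $\Delta_G|_H=\Delta_H$ is verified; the resulting extensions then automatically lie in $B_0(G)$.

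For Case~(2), when $H$ is open in $G$, the quotient $G/H$ is discrete and carries an invariant counting-style measure satisfying Equation~\eqref{eq:haar-quotient} after suitable normalization. Choose coset representatives with $e$ representing $H$. For $\xi,\eta\in\cal H_{\pi_0}$, define $F_\xi,F_\eta$ in the induced Hilbert space to be supported on the single coset $H$ with $F_\xi(h)=\pi_0(h^{-1})\xi$ and $F_\eta(h)=\pi_0(h^{-1})\eta$ for $h\in H$; equivariance $F_\xi(gh)=\pi_0(h^{-1})F_\xi(g)$ and continuity (since $H$ is clopen) are routine. Computing $\langle\Ind\pi_0(g)F_\xi,F_\eta\rangle$: for $g\in H$ only the distinguished coset contributes and one obtains a scalar multiple of $\xi *_{\pi_0}\eta(g)$, while for $g\notin H$ the supports of $\Ind\pi_0(g)F_\xi$ and $F_\eta$ lie in distinct cosets and the inner product vanishes. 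Thus every elementary coefficient function of $\pi_0$ extends by zero to one of $\Ind_H^G\pi_0$, and Theorem~\ref{Api-arsac} together with Proposition~\ref{prop:B_0(G)|_H-inside-B_0(H)} yields surjectivity of $r$.

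For Case~(1), with $H=Z(G)$, first establish $\Delta_G|_{Z(G)}=\Delta_{Z(G)}=1$: Abelianity of $Z(G)$ gives $\Delta_{Z(G)}\equiv 1$, while for $z\in Z(G)$ centrality makes right translation by $z$ coincide with left translation, preserving the left Haar measure and forcing $\Delta_G(z)=1$. Hence Lemma~\ref{lem:C0-indues-to-C0-rep} applies. Now imitate Theorem~\ref{thm1}: pick $v_\alpha\in C_c(G)$ supported in shrinking relatively compact neighborhoods of the identity, form the induced coefficient function $F_\alpha(x)=\langle\Ind\pi_0(x)\,{\cal P}f_{v_\alpha,\xi},\,{\cal P}f_{v_\alpha,\eta}\rangle$ via Equation~\eqref{eq:induced-rep-coeff}, and specialise to $x=z\in Z(G)$. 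Centrality gives $z^{-1}gh=gz^{-1}h$, and an Abelian substitution $h\mapsto zh$ in $Z(G)$ then pulls $\pi_0(z)$ out of the inner product, yielding $F_\alpha|_{Z(G)}(z)=\langle\pi_0(z)T_\alpha\xi,\eta\rangle$ with
\[
T_\alpha\xi=\int_{Z(G)}\phi_\alpha(h)\,\pi_0(h)\xi\,dh,\qquad \phi_\alpha(h)=\int_G v_\alpha(gh)\,v_\alpha(g)\,dg.
\]

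The main obstacle -- the step where one cannot directly copy Theorem~\ref{thm1}, since $G$ need not be SIN or unimodular -- is to choose $v_\alpha\geq 0$ so that $\{\phi_\alpha\}$, viewed in $L^1(Z(G))$ after an appropriate normalization, is a bounded approximate identity forcing $T_\alpha\to I$ strongly on $\cal H_{\pi_0}$. A significant simplification here is that for $H=Z(G)$ the ``centrality of $v_\alpha$'' hypothesis from Theorem~\ref{thm1} is automatic, since conjugation by any $z\in Z(G)$ acts trivially on functions on $G$; one therefore only needs $v_\alpha$ supported in shrinking neighborhoods of the identity in $G$ together with a normalization ensuring that $\|\phi_\alpha\|_{L^1(Z(G))}$ stays bounded and tends to one, which may be arranged by local structural considerations near the identity. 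Granted this, $F_\alpha|_{Z(G)}=(T_\alpha\xi)*_{\pi_0}\eta$ converges to $\xi *_{\pi_0}\eta$ in $B(Z(G))$-norm, so $A_{\Ind\pi_0}(G)|_{Z(G)}$ is dense in the linear span $F_{\pi_0}(Z(G))$ of coefficient functions of $\pi_0$, hence in $B_0(Z(G))$; Proposition~\ref{prop:B_0(G)|_H-inside-B_0(H)} then closes the argument via closedness of the restriction image.
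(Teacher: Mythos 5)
Your proposal is correct, and it splits into one familiar half and one genuinely different half. For Case (1) you follow essentially the paper's own route: induce a representation $\pi_0$ of $H=Z(G)$ with $A_{\pi_0}(H)=B_0(H)$, form the coefficient functions $F_\alpha$ attached to ${\cal P}f_{v_\alpha,\xi}$, and replace the SIN hypothesis of Theorem \ref{thm1} (centrality of the functions $v_\alpha$) by centrality of the \emph{elements} of $H$, which is exactly the observation the paper makes in its Case 1. Your operator $T_\alpha$ with kernel $\phi_\alpha(h)=\int_G v_\alpha(gh)v_\alpha(g)\,dg$ is the paper's $\pi_0\bigl(v'_\alpha*\check{v'_\alpha}|_H\bigr)$ in different notation, and the one step you leave vague (the normalization, ``arranged by local structural considerations'') needs no structure at all: one simply divides by $\|\phi_\alpha|_H\|_{L^1(H)}$, which is finite and strictly positive because $\phi_\alpha$ is continuous, nonnegative, compactly supported, and $\phi_\alpha(e)=\|v_\alpha\|_2^2>0$; this is precisely the paper's normalization $v'_\alpha=v_\alpha/\sqrt{\|v_\alpha*\check{v_\alpha}|_H\|_{L^1(H)}}$. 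For Case (2) your argument genuinely differs from the paper's. The paper runs the same approximate-identity scheme once more, choosing $u_\alpha\in C_c(G)$ supported inside the open subgroup $H$ and showing $F_\alpha|_H=[\pi_0(u_\alpha)\xi]*_{\pi_0}[\pi_0(u_\alpha)\eta]\to \xi*_{\pi_0}\eta$, so it obtains only density of $A_{\Ind_{\pi_0}}(G)|_H$ in $F_{\pi_0}(H)$ and then invokes closedness of the image. You instead exploit discreteness of $G/H$ to build vectors $F_\xi, F_\eta$ in the induced Hilbert space supported on the single coset $H$, and your computation is sound: for $g\in H$ only that coset contributes and one gets $\mu(\{H\})\,\xi*_{\pi_0}\eta(g)$, while for $g\notin H$ the supports of $\Ind_{\pi_0}(g)F_\xi$ and $F_\eta$ lie in disjoint cosets, so the coefficient vanishes. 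This buys strictly more than the paper's argument: every elementary coefficient of $\pi_0$ extends \emph{exactly}, by zero, to a coefficient of $\Ind_{\pi_0}$, and then Theorem \ref{Api-arsac} upgrades this to an exact, norm-controlled extension by zero of every element of $B_0(H)$ (the classical extension-by-zero phenomenon for open subgroups, specialized to $B_0$), with no limiting argument and no essential use of Lemma \ref{lem:C0-indues-to-C0-rep}, since the zero-extension of a $C_0(H)$ function is obviously in $C_0(G)$. Both halves are correct as written.
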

\begin{proof}
Let $dg$ and $dh$ denote the Haar measures of $G$ and $H$ respectively. In either of the above cases,
the quotient space $G/H$ admits a $G$-invariant measure $d\dot{g}$ that can be normalized to satisfy Equation (\ref{eq:haar-quotient}).
Define $\pi_0$, $A_{\pi_0}(H)$, and $F_{\pi_0}(H)$ as in the proof of Theorem \ref{thm1}.
Again, we only need to show that $A_{\Ind_{\pi_0}}(G)|_H$ is dense in $F_{\pi_0}(H)$.
\\

{\bf Case 1:} Suppose that $H$ is the center of $G$. Let  $\{{\cal U}_\alpha\}_\alpha$ and $\{{\cal V}_\alpha\}_\alpha$ be bases of relatively compact neighborhoods of $e_G$ in $G$ such that ${\cal V}_\alpha{\cal V}_\alpha^{-1}\subseteq {\cal U}_\alpha$ for each $\alpha$.
Fix a net $\{v_\alpha\}_\alpha$ of nonnegative continuous functions which are not identically zero and satisfy $\supp(v_\alpha)\subseteq {\cal V}_\alpha$. Observe that for each $\alpha$, the function $u_\alpha=\frac{v_\alpha*\check{v_\alpha}|_H}{\|v_\alpha*\check{v_\alpha}|_H\|_{L^1(H)}}$ is a nonnegative continuous
function supported in ${\cal U}_\alpha\cap H$, and the net
$\{u_\alpha\}_\alpha$ forms a bounded approximate identity for $L^1(H)$.
Fix an element $f=\xi*_{\pi_0}\eta$ of $F_{\pi_0}(H)$, and consider the function $F_{\alpha}$ defined as
\begin{equation}\label{eq-Ind-coeff}
F_{\alpha}(x)=\langle\Ind_{\pi_0}(x){\cal P}f_{v'_\alpha,\xi},{\cal P}f_{v'_\alpha,\eta}\rangle_{{\cal
F}_0}=\int_G\int_H v'_\alpha(x^{-1}gh)v'_\alpha(g)\xi*_{\pi_0}\eta(h)dhdg,\nonumber
\end{equation}
where $v'_\alpha=\frac{1}{\sqrt{\|v_\alpha*\check{v_\alpha}|_H\|_{L^1(H)}}}v_\alpha$.

For every $h'$ in $H$, we have
\begin{eqnarray*}
F_{\alpha}|_H(h')&=&\int_G\int_H v'_{\alpha}(h'^{-1}gh)v'_\alpha(g)\xi*_{\pi_0}\eta(h)dhdg\\
&=&\int_G\int_H v'_\alpha(gh'^{-1}h)v'_\alpha(g)\xi*_{\pi_0}\eta(h)dhdg\\
&=&\int_G\int_H v'_\alpha(gh)v'_\alpha(g)\xi*_{\pi_0}\eta(h'h)dhdg\\
&=&\int_G\int_Hv'_\alpha(g)\check{v'_\alpha}(g^{-1}h^{-1})\langle\pi_0(h')\xi,\pi_0(h^{-1})\eta\rangle dhdg\\
&=&\int_H[v'_\alpha*\check{v'_\alpha}](h^{-1})\langle\pi_0(h')\xi,\pi_0(h^{-1})\eta\rangle dh\\
&=&\langle\pi_0(h')\xi,\pi_0( v'_{\alpha}*\check{v'_\alpha}|_H)\eta\rangle,\\
\end{eqnarray*}
where we used the unimodularity of $H$. Note that
$$\|f-F_\alpha|_H\|_{B(H)}=\|\xi*_{\pi_0}\eta-\xi*_{\pi_0}\pi_0(u_\alpha)\eta\|_{B(H)}
\leq\|\xi\|\|\eta-\pi_0(u_\alpha)^*\eta\|\rightarrow 0,$$
by (\ref{eq:b.a.i.converge}). Hence $A_{\Ind_{\pi_0}}(G)|_H$ is dense in $F_{\pi_0}(H)$.\\

{\bf Case 2:} Assume that $H$ is an open subgroup of $G$, and $dh$ is the restriction of $dx$ to the open subset $H$. Let  $\{{\cal U}_\alpha\}_\alpha$  be a basis of relatively compact neighborhoods of $e_G$ in $G$ such that ${\cal U}_\alpha\subseteq H$ for each $\alpha$. Fix a net $\{u_\alpha\}_\alpha$ of nonnegative continuous functions which are not identically zero and satisfy $\supp(u_\alpha)\subseteq {\cal U}_\alpha$. Observe that the net
$\{u_\alpha\}_\alpha$ forms a bounded approximate identity for both $L^1(H)$ and $L^1(G)$.
Fix an element $f=\xi*_{\pi_0}\eta$ of $F_{\pi_0}(H)$, and consider the function $F_{\alpha}$ defined as
\begin{equation}\label{eq-Ind-coeff}
F_{\alpha}(x)=\langle\Ind_{\pi_0}(x){\cal P}f_{u_\alpha,\xi},{\cal P}f_{u_\alpha,\eta}\rangle_{{\cal
F}_0}=\int_G\int_H u_\alpha(x^{-1}gh)u_\alpha(g)\xi*_{\pi_0}\eta(h)dhdg.\nonumber
\end{equation}
For every $h'$ in $H$, we have
\begin{eqnarray*}
F_{\alpha}|_H(h')&=&\int_G\int_H u_{\alpha}(h'^{-1}gh)u_\alpha(g)\xi*_{\pi_0}\eta(h)dhdg\\
&=&\int_H\int_H u_{\alpha}(h'^{-1}h''h)u_\alpha(h'')\xi*_{\pi_0}\eta(h)dhdh''\\
&=&\int_H\int_H u_{\alpha}(h'^{-1}h)u_\alpha(h'')\xi*_{\pi_0}\eta(h''^{-1}h)dhdh''\\
&=&u_\alpha*(\xi*_\pi\eta)*\check{u_\alpha}(h')\\
&=&[\pi_0(u_\alpha)\xi]*_{\pi_0}[\pi_0(u_\alpha)\eta].\\
\end{eqnarray*}
Note that
$$\|f-F_\alpha|_H\|_{B(H)}=\|\xi*_{\pi_0}\eta-[\pi_0(u_\alpha)\xi]*_{\pi_0}[\pi_0(u_\alpha)\eta]\|_{B(H)}
\rightarrow 0,$$
by Equation (\ref{eq:b.a.i.converge}). Hence $A_{\Ind_{\pi_0}}(G)|_H$ is dense in $F_{\pi_0}(H)$ and we are done.

\end{proof}

One of the most natural questions about $B_0(G)$ is to characterize the groups $G$ for which the Rajchman algebra properly contains the Fourier algebra.
In 1966,
Hewitt and Zuckerman \cite{Hewitt-Zuckerman} proved that the inclusion of $A(G)$ in $B_0(G)$ is proper
for every non-compact locally compact Abelian group $G$. On the other hand, in his study of the representations of $ax+b$ group, Khalil \cite{Khalil} proved that the Rajchman algebra and the Fourier algebra coincide in this case. The question is open in general.

A locally compact group $G$ is called an AR-group if the left regular representation of $G$ decomposes into a direct sum of irreducible representations. Clearly ${\mathbb R}$ is not an AR-group. On the other hand, compact groups and $ax+b$ group are examples of AR-groups.
Fig{\`a}-Talamanca proved that if $G$ is a unimodular non-compact locally compact group for which $A(G)=B_0(G)$, then $G$ is an AR-group (\cite{Figa1} and \cite{Figa1}). In \cite{Baggett-Taylor-A=B0=>reducible}, Baggett and Taylor showed that the above result holds even without the unimodularity condition. This result together with Theorem 3.1 of \cite{miao-mah} implies that $B_0(G)$ is larger than $A(G)$ for any non-compact IN-group $G$.
As a corollary of Theorem \ref{thm1}, we can easily obtain this result for the special case of non-compact connected SIN-groups.

\begin{corollary}
Let $G$ be a non-compact connected SIN group. Then $A(G)\neq B_0(G)$
\end{corollary}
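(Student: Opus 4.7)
The plan is to reduce to the abelian case via Theorem \ref{thm1} applied to a copy of $\mathbb{R}$ sitting inside $G$ as a closed subgroup. The key structural fact I will invoke is that every connected SIN-group has the form $\mathbb{R}^n\times K$ for some compact group $K$ and integer $n\geq 0$ (a classical result of Grosser--Moskowitz); when $G$ is in addition non-compact we must have $n\geq 1$, so $G$ contains $H=\mathbb{R}$ as a closed subgroup.

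Assume for contradiction that $A(G)=B_0(G)$. Since $G$ is a SIN-group, Theorem \ref{thm1} gives $B_0(G)|_H = B_0(H)$. On the other hand, by the well-known restriction theorem of Herz--Takesaki for Fourier algebras, $A(G)|_H=A(H)$. Restricting both sides of the assumed identity $A(G)=B_0(G)$ to $H$ therefore yields
\[
A(H) \;=\; A(G)|_H \;=\; B_0(G)|_H \;=\; B_0(H).
\]
Since $H\cong \mathbb{R}$, this says $A(\mathbb{R})=B_0(\mathbb{R})$, directly contradicting the classical Hewitt--Zuckerman result (already cited above) that $A(G_0)\subsetneq B_0(G_0)$ for every non-compact locally compact abelian group $G_0$.

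Only two ingredients deserve a closer look. First, the structure theorem for connected SIN-groups is essential and I would simply cite it; absent it, one could still argue directly that a non-compact connected SIN-group, being in particular a non-compact connected unimodular group with relatively compact conjugacy classes, must contain a copy of $\mathbb{R}$ in its center. Second, the appeal to Herz's theorem $A(G)|_H=A(H)$ is standard and already invoked in the opening paragraph of Section \ref{section:functorial}.

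The main (mild) obstacle is simply locating the closed subgroup $H\cong\mathbb{R}$; once that is in hand, the argument is a clean pull-back of the abelian non-amenability result through the two restriction theorems, and no further computation is needed.
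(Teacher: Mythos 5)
Your proof is correct and follows essentially the same route as the paper: both arguments locate a non-compact Abelian closed subgroup $H$ of $G$ (you make this explicit via the Grosser--Moskowitz decomposition $G\cong\mathbb{R}^n\times K$, which the paper also invokes elsewhere), restrict the assumed equality $A(G)=B_0(G)$ to $H$ using Theorem \ref{thm1} for $B_0$ and the Herz--Takesaki theorem for $A$, and derive a contradiction with the Hewitt--Zuckerman result that $A(H)\subsetneq B_0(H)$ for non-compact Abelian $H$. The only cosmetic difference is that you pin down $H\cong\mathbb{R}$ explicitly, while the paper merely asserts the existence of a non-compact Abelian closed subgroup.
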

\begin{proof}
Assume not, i.e. $A(G)=B_0(G)$. Since $G$ is a non-compact connected  SIN group, it has a non-compact Abelian closed subgroup $H$.
By the surjective-ness of the restriction map for SIN groups we have,
$$A(H)=A(G)|_H=B_0(G)|_H=B_0(H),$$
where we have used the fact that for any locally compact group $G$ and its closed subgroup $H$, the restriction map between the Fourier algebras are onto. But $A(H)\neq B_0(H)$ for any non-compact Abelian group.
\end{proof}

\section{Amenability properties of Rajchman algebras}\label{section:amenability-of-B0}
In this section, we use the following hereditary property of (operator) amenability to extend the results of Section \ref{section:abelian}.
Let ${\cal A}$ and ${\cal B}$ be (completely contractive) Banach algebras, and $\phi$ be a surjective (completely) bounded homomorphism from ${\cal A}$ to ${\cal B}$ . If
${\cal A}$ is (operator) amenable then ${\cal B}$ is (operator) amenable as well (see Section 2.3 of \cite{volkerbook}). It is very easy to see that a similar property holds for
the existence of a bounded approximate identity, i.e.

\begin{proposition}\label{prop:hereditary-bai}
Let ${\cal A}$ and ${\cal B}$ be Banach algebras, and $\phi$ be a surjective bounded homomorphism from ${\cal A}$ to ${\cal B}$ . If
$\{u_\alpha\}_{\alpha\in I}$ is a bounded approximate identity for ${\cal A}$ then
$\{\phi(u_\alpha)\}_{\alpha\in I}$ is a bounded approximate identity for ${\cal B}$.
\end{proposition}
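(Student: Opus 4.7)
The plan is to verify the two defining properties of a bounded approximate identity for $\{\phi(u_\alpha)\}_{\alpha\in I}$ in $\mathcal{B}$, leveraging only the continuity, multiplicativity, and surjectivity of $\phi$. The argument is entirely routine and there is no genuine obstacle; I include it mainly for bookkeeping.

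First I would check boundedness. Since $\phi$ is a bounded linear map and $\{u_\alpha\}_{\alpha\in I}$ is a bounded net in $\mathcal{A}$, the estimate $\|\phi(u_\alpha)\|_{\mathcal{B}}\leq \|\phi\|\,\|u_\alpha\|_{\mathcal{A}}$ shows that $\{\phi(u_\alpha)\}_{\alpha\in I}$ is a bounded net in $\mathcal{B}$, with bound $\|\phi\|\cdot\sup_{\alpha}\|u_\alpha\|_{\mathcal{A}}$.

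Next I would verify the approximate identity property. Fix $b\in \mathcal{B}$. By surjectivity of $\phi$, there exists $a\in \mathcal{A}$ with $\phi(a)=b$. Using that $\phi$ is an algebra homomorphism we rewrite
\[
\phi(u_\alpha)\,b-b \;=\; \phi(u_\alpha)\phi(a)-\phi(a) \;=\; \phi(u_\alpha a - a),
\]
and similarly $b\,\phi(u_\alpha)-b=\phi(au_\alpha-a)$. Since $\{u_\alpha\}$ is a bounded approximate identity for $\mathcal{A}$, we have $u_\alpha a\to a$ and $au_\alpha\to a$ in $\mathcal{A}$, and continuity of $\phi$ then yields $\phi(u_\alpha)b\to b$ and $b\phi(u_\alpha)\to b$ in $\mathcal{B}$. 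This establishes that $\{\phi(u_\alpha)\}_{\alpha\in I}$ is a bounded approximate identity for $\mathcal{B}$, completing the proof.
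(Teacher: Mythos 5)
Your proof is correct and is exactly the routine argument intended here: the paper itself omits a proof (calling the proposition ``very easy to see''), and the same computation --- lift $b$ to a preimage $a$, write $\phi(u_\alpha)b-b=\phi(u_\alpha a-a)$, and use boundedness of $\phi$ --- already appears verbatim in the quotient-map step of the proof of Theorem \ref{thm:M0(Abelian)-no-b.a.i}. Nothing to add.
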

Moreover, a closed ideal of an amenable Banach algebra is amenable if it is weakly complemented.
Finally recall that the existence of a bounded approximate identity is a necessary condition for (operator)  amenability of a (completely contractive) Banach algebra. The reader may refer to \cite{volkerbook} for more details.
We are now able to characterize the groups $G$ whose Rajchman algebras are amenable.
\begin{theorem}\label{thm:B0-amen}
Let ${\cal A}$ be a closed subalgebra of $B(G)$ which contains
$B_0(G)$. Then ${\cal A}$ is amenable if and only if $G$ is compact and has an
Abelian subgroup of finite index.
\end{theorem}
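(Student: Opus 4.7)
My plan is to prove both directions of this equivalence; sufficiency is direct, while necessity carries the real content. The core strategy is to exploit the fact that both $A(G)$ and $B_0(G)$ are closed ideals of $\mathcal{A}$, so that by Willis's theorem on amenable Banach algebras they inherit bounded approximate identities from the amenability of $\mathcal{A}$; this reduces the problem to the Forrest--Runde characterization of amenability of $A(G)$ and the non-existence results from Section~\ref{section:abelian}.

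For sufficiency, assume $G$ is compact with an Abelian subgroup of finite index. For compact $G$ one has $C_c(G)=C(G)$, so $A(G)=B_0(G)=B(G)$; hence $\mathcal{A}=A(G)$, and the Forrest--Runde theorem on amenability of the Fourier algebra yields amenability of $\mathcal{A}$ directly.

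For necessity, assume $\mathcal{A}$ is amenable. I would first show that $G$ is almost Abelian. Since $A(G)$ is a closed ideal of $B(G)$ (the product of a compactly supported element of $B(G)$ with any element of $B(G)$ is again compactly supported in $B(G)$, and $A(G)$ is the closure of such elements), and $A(G)\subseteq C_0(G)\cap B(G)=B_0(G)\subseteq\mathcal{A}$, it follows that $A(G)$ is a closed ideal of $\mathcal{A}$. By Willis's theorem asserting that every closed ideal of an amenable Banach algebra carries a bounded approximate identity, $A(G)$ has a bai; hence $A(G)$ is itself amenable, and the Forrest--Runde characterization produces an Abelian subgroup of finite index in $G$, which we may take to be closed by passing to its closure.

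To conclude that $G$ is compact, I would argue by contradiction. If $G$ were non-compact, the Abelian closed finite-index subgroup $H$ must be non-compact (else $G$ would be a finite union of cosets of the compact group $H$, hence compact), and $H$ is automatically open since its complement is a finite union of closed cosets. Thus $H$ is an open, non-compact, Abelian closed subgroup, and Theorem~\ref{thm2} guarantees that the restriction $r:B_0(G)\to B_0(H)$ is surjective. Applying Willis's theorem once more to the closed ideal $B_0(G)$ of $\mathcal{A}$, we obtain a bounded approximate identity $\{e_\alpha\}$ for $B_0(G)$. Lifting each $u\in B_0(H)$ to some $\tilde u\in B_0(G)$ via surjectivity of $r$ yields $u\cdot r(e_\alpha)=r(\tilde u e_\alpha)\to r(\tilde u)=u$, so $\{r(e_\alpha)\}$ is a bounded approximate identity for $B_0(H)$. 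But $\hat H$ is non-discrete, so by Theorem~\ref{thm:M0(Abelian)-no-b.a.i} the algebra $B_0(H)=M_0(\hat H)$ admits no approximate identity whatsoever, a contradiction. The main obstacle is the invocation of Willis's theorem, which is precisely the bridge that converts the abstract amenability hypothesis on $\mathcal{A}$ into concrete bounded approximate identities in $A(G)$ and $B_0(G)$; given those, the open-subgroup functoriality in Theorem~\ref{thm2} transports the Abelian obstruction of Section~\ref{section:abelian} back to $G$.
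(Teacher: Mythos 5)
Your overall skeleton matches the paper's proof: sufficiency via $B_0(G)=B(G)=A(G)$ for compact $G$, and necessity via amenability of the ideal $A(G)$, the Forrest--Runde characterization, openness of the finite-index Abelian subgroup $H$, surjectivity of $r\colon B_0(G)\to B_0(H)$ from Theorem \ref{thm2}, and the contradiction with Theorem \ref{thm:M0(Abelian)-no-b.a.i}. The gap is the bridge you invoke twice: there is no theorem asserting that \emph{every} closed ideal of an amenable Banach algebra has a bounded approximate identity, and indeed that statement is false. For example, $A(\mathbb{T})\cong L^1(\mathbb{Z})$ is amenable, yet for a closed set $E\subseteq\mathbb{T}$ the ideal $I(E)=\{u\in A(\mathbb{T}):u|_E=0\}$ has a bounded approximate identity only when $E$ lies in the closed coset ring of $\mathbb{T}$; taking $E$ to be a Cantor set gives a closed ideal of an amenable algebra with no bounded approximate identity. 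The correct hereditary result, which is the one the paper states at the start of Section \ref{section:amenability-of-B0}, is that a closed ideal of an amenable Banach algebra is amenable (equivalently, has a bounded approximate identity) \emph{provided it is weakly complemented}; without some complementation hypothesis the conclusion simply fails.

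To repair your argument you must check that $A(G)$ and $B_0(G)$ are (weakly) complemented ideals of $\mathcal{A}$, which is precisely what the paper's proof leans on when it says ``Since $B_0(G)$ and $A(G)$ are complemented ideals of $\mathcal{A}$, they are amenable as well.'' This is available: both spaces are norm-closed and invariant under left and right translations, so by Arsac's theory each is of the form $z\cdot B(G)$ for a central projection $z$ in the universal enveloping von Neumann algebra of $G$; since $z\cdot u$ lands back in $A(G)\subseteq\mathcal{A}$ (respectively in $B_0(G)\subseteq\mathcal{A}$) for every $u\in\mathcal{A}$, the map $u\mapsto z\cdot u$ is a bounded projection of $\mathcal{A}$ onto the ideal in question. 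Once complementation is in place, both of your applications become legitimate --- amenability of $A(G)$, and the bounded approximate identity of $B_0(G)$ that you push forward to $B_0(H)$ --- and the rest of your proof, including the transfer $u\cdot r(e_\alpha)=r(\tilde u e_\alpha)\to u$ and the appeal to Theorem \ref{thm:M0(Abelian)-no-b.a.i}, is correct and coincides with the paper's route (the paper transfers amenability rather than the bounded approximate identity, but the two variants are interchangeable here).
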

\begin{proof}
Suppose $G$ is compact and has an Abelian subgroup of finite index.
Then $B_0(G)={\cal A}=B(G)$, and it is amenable by Corollary 4.2 of \cite{lau-roy-willis}.

Conversely, suppose that ${\cal A}$ is amenable. Since $B_0(G)$ and
$A(G)$ are complemented ideals of ${\cal A}$, they are amenable as
well. Hence, by the characterization of amenable Fourier algebras by Forrest and Runde \cite{forrest-runde-A}, $G$
is almost Abelian, i.e. it has an Abelian subgroup $H$ of finite
index. Note that $H$ is clearly an open subgroup. Hence by Theorem \ref{thm2} the
restriction map $r:B_0(G)\rightarrow B_0(H)$ is surjective, which implies
that $B_0(H)$ is amenable as well. Since $H$ is Abelian, by Corollary \ref{cor:B_0(abelian-noncomp)Not-opr-amen} the amenability
of $B_0(H)$ implies that $H$ is compact.  Therefore
$G$ is compact as well.
\end{proof}

 The question of characterizing the operator amenability of $B_0(G)$ turns out to be difficult. In the rest of this paper, we prove extreme cases for operator amenability of $B_0(G)$.
First note that if $G$ is compact then $B_0(G)=B(G)=A(G)$, and $B_0(G)$ is  operator amenable \cite{ruan}.
For certain groups such as Fell groups and the $ax+b$ group, the associated  Rajchman algebras are non-amenable, but they are operator amenable. Indeed,  it has been shown for both cases that the Rajchman algebra and the Fourier algebra coincide (see \cite{VolkerNicoOpr2} and \cite{Khalil}), and are operator amenable due to the amenability of the groups themselves \cite{ruan}.
On the other extreme, we show that the Rajchman algebra of a connected non-compact SIN-group and  $\SL$ cannot
be operator amenable.

\begin{proposition}\label{prop:noncpt-connected-SIN-NOT-opr-amen}
Let $G$ be a non-compact connected SIN-group. Then,
 $B_0(G)$ does not have a bounded approximate identity, and is not operator amenable.
\end{proposition}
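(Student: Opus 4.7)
The plan is to reduce the statement to the abelian case already handled in Corollary~\ref{cor:B_0(abelian-noncomp)Not-opr-amen} (or more precisely to Theorem~\ref{thm:M0(Abelian)-no-b.a.i}) by means of the restriction map, exactly as in the corollary preceding the statement. The key structural fact is that a non-compact connected SIN-group must contain a non-compact closed abelian subgroup $H$; this is precisely the ingredient used in the corollary just before the proposition, so I take it as available.

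First I would fix such an $H$. By Theorem~\ref{thm1}, since $G$ is a SIN-group and $H$ is a closed subgroup, the restriction map $r\colon B_0(G)\to B_0(H)$ is surjective. The map $r$ is clearly an algebra homomorphism (restriction respects pointwise multiplication) and is contractive by Proposition~\ref{prop:B_0(G)|_H-inside-B_0(H)}.

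Next I would argue by contradiction. Suppose $B_0(G)$ has a bounded approximate identity $\{u_\alpha\}_{\alpha\in I}$. Applying Proposition~\ref{prop:hereditary-bai} to the surjective bounded homomorphism $r$, the net $\{r(u_\alpha)\}_{\alpha\in I}=\{u_\alpha|_H\}_{\alpha\in I}$ is a bounded approximate identity for $B_0(H)$. Since $H$ is a non-compact locally compact abelian group, its dual $\widehat{H}$ is a non-discrete locally compact abelian group, and under the standard identification $B_0(H)\cong M_0(\widehat{H})$. However, Theorem~\ref{thm:M0(Abelian)-no-b.a.i} states that $M_0(\widehat{H})$ has no approximate identity at all, a contradiction. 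Hence $B_0(G)$ cannot have a bounded approximate identity. Since the existence of a bounded approximate identity is a necessary condition for operator amenability of a completely contractive Banach algebra, $B_0(G)$ is not operator amenable either.

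The only step with any subtlety is locating the non-compact closed abelian subgroup $H$ inside $G$, and this is precisely the structural input already invoked in the corollary preceding the proposition; everything else is a direct chain of appeals to Theorem~\ref{thm1}, Proposition~\ref{prop:hereditary-bai}, and Theorem~\ref{thm:M0(Abelian)-no-b.a.i}.
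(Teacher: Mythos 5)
Your proposal is correct and follows essentially the same route as the paper: the paper simply makes the non-compact closed Abelian subgroup explicit via the structure theorem for connected SIN-groups, writing $G=\mathbb{R}^n\times K$ with $K$ compact and taking $H=\mathbb{R}^n$, and then applies the surjective restriction map, Proposition \ref{prop:hereditary-bai}, and Theorem \ref{thm:M0(Abelian)-no-b.a.i} exactly as you do. Incidentally, your citation of Theorem \ref{thm1} for the surjectivity of $r$ is the more accurate one, since the paper invokes Theorem \ref{thm2} even though $\mathbb{R}^n$ need be neither open in $G$ nor equal to its center.
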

\begin{proof}
Since $G$ is a non-compact connected SIN-group, it is of
the form $G=\mathbb{R}^n\times K$, where $K$ is a compact subgroup.
Hence $\mathbb{R}^n$ is a closed subgroup of the SIN-group $G$, and
by Theorem \ref{thm2},  the restriction map $r:B_0(G)\rightarrow
B_0(\mathbb{R}^n)$ is a surjective completely bounded algebra homomorphism. Now suppose that $B_0(G)$ is
has a bounded approximate identity. Then by Proposition \ref{prop:hereditary-bai}
$B_0(\mathbb{R}^n)$  also has a bounded approximate identity, which contradicts Theorem \ref{thm:M0(Abelian)-no-b.a.i}. Hence $B_0(G)$ does not
have a bounded approximate identity, which implies that it is not operator amenable.
\end{proof}

\begin{remark}
Let $\GL$ and $\Hn$ denote the general linear group and the Heisenberg group of degree $n$ respectively.
Using an argument similar to the proof of Proposition \ref{prop:noncpt-connected-SIN-NOT-opr-amen}, one can see that the Rajchman algebras $B_0(\GL)$ and  $B_0(\Hn)$
do not have bounded approximate identities, and are not operator amenable. Indeed, we only need to consider the restriction map from each group to
its center, and notice that the center of each group is non-compact.
\end{remark}
\subsection{Special linear group}
Let us now consider the case $G=\SL$.
We use the notations from \cite{folland} and parametrize the dual space $\widehat{\SL}$ through its
identification with the following family of representations:
\begin{eqnarray*}
&&\text{trivial representation:} \quad \iota,\\
&&\text{principal continuous series:}\quad \{\pi_{it}^+:\ t\geq 0\}\cup\{\pi_{it}^-:\ t>0\},\\
&&\text{discrete series:}\quad\{\delta_{\pm n}:\ n\geq 2\},\\
&&\text{mock discrete series:}\quad \delta_{\pm 1},\\
&&\text{complementary series:} \quad \{\kappa_s:\ 0<s<1\}.\\
\end{eqnarray*}
We use the results of Repka \cite{repka} and Puk\'{a}nszky \cite{Pukanszky} regarding the decomposition of tensor products of unitary representations of $\SL$
to observe that $B_0(\SL)$ is not (operator) amenable. One may refer to \cite{folland}, \cite{dixmier} and \cite{arsac} for more details regarding direct integrals. The author would like to thank Viktor Losert for pointing her attention to the above-mentioned results.

\begin{theorem}\label{b0-SL-not-sq-dense}
If $G$ is the group $\SL$ then $B_0(G)$ is not square-dense, i.e.
$$\overline{B_0(G)^2}\neq B_0(G).$$
\end{theorem}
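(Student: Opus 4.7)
The plan is to realize both $B_0(G)$ and $\overline{B_0(G)^2}$ as coefficient spaces $A_\pi(G)$ for suitable representations $\pi$, and then separate them using the decomposition theory of tensor products of unitary representations of $G=\SL$. Since $B_0(G)$ is a left and right translation invariant closed subspace of $B(G)$, by Arsac's theorem (Theorem~\ref{Api-arsac}) there is a representation $\pi_0$ with $B_0(G) = A_{\pi_0}(G)$; up to quasi-equivalence one may take $\pi_0$ to be the direct sum and direct integral of all non-trivial irreducibles of $G$, i.e.\ $\pi_{it}^\pm$, $\delta_{\pm n}$ for $n \geq 2$, the mock discrete series $\delta_{\pm 1}$, and the complementary series $\kappa_s$. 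Because the pointwise product of coefficient functions of $\pi$ and $\sigma$ is itself a coefficient function of $\pi \otimes \sigma$, one has $B_0(G)^2 \subseteq A_{\pi_0 \otimes \pi_0}(G)$, and hence $\overline{B_0(G)^2} \subseteq A_{\pi_0 \otimes \pi_0}(G)$. It then suffices to exhibit a nonzero $u \in B_0(G) = A_{\pi_0}(G)$ which fails to lie in $A_{\pi_0 \otimes \pi_0}(G)$.

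The candidate is any nonzero coefficient function of the mock discrete series $\delta_1$. Using Puk\'{a}nszky's and Repka's classification of decompositions of tensor products of irreducible unitary representations of $\SL$, I would check case by case that $\delta_{\pm 1}$ never appears in the disintegration of $\pi \otimes \sigma$ for $\pi, \sigma$ ranging over the irreducible constituents of $\pi_0$: principal continuous $\otimes$ principal continuous gives a direct integral of principal series against a Plancherel-type measure without atoms, plus a (possibly empty) discrete sum of higher discrete series $\delta_n$ with $|n| \geq 2$; discrete $\otimes$ discrete of the same sign produces discrete series of index $\geq 2$ only; discrete $\otimes$ discrete of opposite signs yields a direct integral of principal series plus at most finitely many higher discretes; $\kappa_s \otimes \kappa_t$ gives a direct integral of principal series plus the single summand $\kappa_{s+t-1}$ when $s+t>1$; and the mixed tensor products against complementary or discrete series reduce to combinations of the above. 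Hence $\delta_1$ is not quasi-contained in $\pi_0 \otimes \pi_0$, so $A_{\delta_1}(G) \not\subseteq A_{\pi_0 \otimes \pi_0}(G)$, and any nonzero coefficient function of $\delta_1$ provides an element of $B_0(G) \setminus \overline{B_0(G)^2}$.

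The main obstacle I anticipate is the rigorous passage from the representation-theoretic statement (that $\delta_1$ does not occur in the spectral decomposition of $\pi_0 \otimes \pi_0$) to the analytic non-containment $A_{\delta_1}(G) \not\subseteq A_{\pi_0 \otimes \pi_0}(G)$ in $B(G)$-norm. Fell-style weak containment is too coarse here: for instance, the complementary series $\kappa_s$ converge to the trivial representation in the Fell topology as $s \to 1$, yet the constant $1$ plainly does not lie in $B_0(G)$. What is actually needed is quasi-containment with respect to the disintegration, which, combined with Arsac's description of $A_\pi(G)$ as a $B(G)$-norm closed subspace of $B(G)$, yields the desired strict containment. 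Once this delicate step is in place, the argument closes: $B_0(G)$ fails to be square-dense, which proves the theorem.
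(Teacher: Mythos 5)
Your overall strategy is the same as the paper's: bound $\overline{B_0(G)^2}$ by a tensor-product coefficient space and separate it from $B_0(G)$ using Repka's decomposition theory, the only strategic difference being the separating representation (the paper uses a fixed principal series point $\pi_{it}^+$, $t>0$, where you use the mock discrete series $\delta_1$; both are legitimate choices, since the decomposition measures $m_{\pi,\pi'}$ charge neither point). However, there are two genuine gaps, and they are linked. The first is your identification of $\pi_0$. By Arsac's theorem there is indeed a representation $\pi_0$ with $A_{\pi_0}(G)=B_0(G)$, but it is \emph{not} quasi-equivalent to the direct sum of all nontrivial irreducibles together with Plancherel-type integrals. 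By Howe--Moore, every unitary representation of $\SL$ without invariant vectors has $C_0$ coefficients; in particular $B_0(G)\supseteq A_\sigma(G)$ for $\sigma=\int^\oplus_{(1/2,1)}\kappa_s\,ds$, a direct integral against a continuous measure on the complementary series. Such a $\sigma$ is disjoint from every direct sum of irreducibles and from the Plancherel measure class, so by Arsac's Proposition 3.12 (cited in the paper's proof) $A_\sigma(G)$ meets your $A_{\pi_0}(G)$ trivially; hence $B_0(G)\neq A_{\pi_0}(G)$ for the $\pi_0$ you describe. Consequently your case-by-case check over tensor products of \emph{irreducible} constituents does not account for all of $B_0(G)^2$: general elements of $B_0(G)$ are coefficients of direct integrals against arbitrary measures on $\hat G\setminus\{\iota\}$, and you must control $\sigma\otimes\sigma'$ for all such pairs, not just $\pi\otimes\pi'$ with $\pi,\pi'$ irreducible.

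The second gap is the step you yourself flag as the main obstacle and then defer: passing from ``$\delta_1$ receives zero mass in each fiber decomposition $\pi\otimes\pi'$'' to ``$A_{\delta_1}(G)\cap\overline{B_0(G)^2}=\{0\}$.'' This is the actual mathematical content of the theorem, and your situation is no easier than the paper's: $\delta_1$ lies in the Fell-topology closure of the odd principal series (recall $\pi_{i0}^-\simeq\delta_1\oplus\delta_{-1}$), so it sits in the support of many of the measures $m_{\pi,\pi'}$ and is merely an \emph{atom-free} point of them --- exactly the status of the paper's $\pi_{it}^+$. The paper closes precisely this step: it writes arbitrary $u,u'\in B_0(G)$ as direct integrals over $\hat G$ against positive measures (Arsac, Corollary 3.55), invokes Repka's theorem that each $m_{\pi,\pi'}$ is absolutely continuous with respect to Plancherel measure on the reduced dual plus at most one complementary point, and then uses Hahn--Banach to produce $S\in\VN_\omega(G)$ with $\widetilde{\pi_{it}^+}(S)\neq 0$ and $\tilde{\pi}(S)=0$ for all other $\pi\in\hat G$, so that $\langle uu',S\rangle=0$ by a Fubini-type computation while $S$ does not annihilate $A_{\pi_{it}^+}(G)\subseteq B_0(G)$. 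Your fiber-level facts from Repka and Puk\'anszky are correct, so the plan is salvageable, but until you supply the analogue of this functional-analytic argument --- with arbitrary measures rather than atomic-plus-Plancherel ones --- what you have is a correct outline rather than a proof, and carrying it out essentially reproduces the paper's argument with $\delta_1$ in place of $\pi_{it}^+$.
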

\begin{proof}
Let $\mu$ denote the Plancherel measure on $\widehat{\SL}$. Recall that the Plancherel measure of the complementary series,  mock discrete series,
and the trivial representation is zero. Moreover,
by Harish-Chandra's trace formula the Plancherel measure on the principal and discrete series is defined as
\begin{eqnarray*}
&& d\mu(\pi_{it}^+)=\frac{t}{2}\tanh \frac{\pi t}{2}\ dt,\\
&& d\mu(\pi_{it}^-)=\frac{t}{2}\coth \frac{\pi t}{2}\ dt,\\
&& \mu(\{\delta_{\pm n}\})=n-1.
\end{eqnarray*}
Therefore, by Proposition 8.4.4 of \cite{dixmier}, the left regular representation $\lambda$ is quasi-equivalent with the representation
\begin{equation}\label{eq:plancherel}
\int^\oplus_{(o,\infty)}\pi_{it}^+ dt\oplus\int^\oplus_{(o,\infty)}\pi_{it}^- dt\oplus\bigoplus_{n=2}^\infty(\delta_n\oplus\delta_{-n}).
\end{equation}
Let $m_{\hat{G}}$ denote the renormalized Plancherel measure given in (\ref{eq:plancherel}). Define the new representations
$$\Pi_0^+=\int^\oplus_{(o,\infty)}\pi_{it}^+ dt\oplus\bigoplus_{k=1}^\infty(\delta_{2k}\oplus\delta_{-2k}),$$
and
$$\Pi_0^-=\int^\oplus_{(o,\infty)}\pi_{it}^- dt\oplus\bigoplus_{k=1}^\infty(\delta_{2k+1}\oplus\delta_{-2k-1}),$$
and observe that the matrix coefficients $A_{\Pi_0^+}$ and $A_{\Pi_0^-}$ are contained in $A(G)$.
Note that these representations are used in the direct integral decomposition of tensor products of irreducible unitary representations of $\SL$.
In fact, Repka \cite{repka} proved that if $\pi$ and $\pi'$ are irreducible unitary representations of $\SL$ then
$$\pi\otimes\pi'\simeq_q\left\{\begin{array}{cc}
                                 \Pi_0^+\oplus\kappa_{r+s-1} & \text{if } \{\pi,\pi'\}=\{\kappa_r,\kappa_s\} \text{ and } r+s\geq 1 \\
                                 \Pi & \text{otherwise},
                               \end{array}
\right.$$
where $\Pi$ is a subrepresentation of $\Pi^+_0$ or $\Pi_0^-$, and $\simeq_q$ denotes the quasi-equivalence of representations.

 For irreducible unitary representations $\pi$ and $\pi'$ of $G$, let $m_{\pi,\pi'}$ denote the measure on $\hat{G}$ which appears in the direct integral decomposition of $\pi\otimes\pi'$. By \cite{repka}, $m_{\pi,\pi'}$ is absolutely continuous with respect to the Plancherel measure $m_{\hat{G}}$
 on $\hat{G}_r$, and $\supp(m_{\pi,\pi'})$ contains at most one element from the complementary series.
Now let $u$ and $u'$ be elements of the coefficient spaces $A_\pi$ and $A_{\pi'}$ respectively, with
trace operators $T_\pi$ and $T_{\pi'}$ such that
$$u=\Tr(\pi(\cdot)T_\pi) \quad \mbox{ and }\quad u'=\Tr(\pi'(\cdot)T_{\pi'}).$$
Then
\begin{equation}\label{eq:tensor-coeff-irred}
uu'=\Tr(\pi\otimes\pi'(\cdot)T_\pi\otimes T_{\pi'})=\int_{\hat{G}}\Tr(\pi''(\cdot)T_{\pi,\pi';\pi''})dm_{\pi,\pi'}(\pi'').
\end{equation}

Finally let $u$ and $u'$ be elements of $B_0(G)$. By Corollary 3.55 of \cite{arsac}, there exist  positive measures $\mu$ and $\mu'$ on $\hat{G}$
such that
\begin{eqnarray*}
u=\int_{\hat{G}}\Tr(\pi(\cdot)T_\pi)d\mu(\pi) \qquad \mbox{and} \qquad u'=\int_{\hat{G}}\Tr(\pi(\cdot)T'_\pi)d\mu'(\pi),
\end{eqnarray*}
where $\{T_\pi\}_{\pi\in\hat{G}}$ and
$\{T'_\pi\}_{\pi\in\hat{G}}$ are  elements of $L^1(\hat{G},\mu)^\oplus$ and $L^1(\hat{G},\mu')^\oplus$ respectively.
Therefore by (\ref{eq:tensor-coeff-irred}) we have,
\begin{eqnarray}\label{eq: uu'-integral-formula}
uu(\cdot)&=&\int_{\hat{G}\times\hat{G}}\Tr(\pi\otimes\pi'(\cdot)T_\pi\otimes T'_{\pi'})d\mu(\pi,\pi')\nonumber\\
&=&\int_{\hat{G}\times\hat{G}}\int_{\hat{G}} \Tr(\pi''(\cdot)T_{\pi,\pi';\pi''})dm_{\pi,\pi'}(\pi'') d\mu(\pi,\pi').
\end{eqnarray}

For a unitary representation $\pi$ of $G$, let $\tilde{\pi}$ denote the surjective map generated by $\pi$ from $\VN_\omega(G)$ to
$\VN_\pi(G)$, where $\omega$ is the universal representation of $G$. Note that every unitary representation $\pi$
of $G$ extends to a nondegenerate norm-decreasing $*$-representation of $C^*$-algebras from $C^*(G)$ to
$C_\pi^*(G)$, which identifies $C^*_\pi(G)$ with a quotient of $C^*(G)$. Then the dual map $\pi^*$ identifies $B_\pi(G)$ with  a subset
of $B(G)$, and we have
$$\tilde{\pi}=(\pi^*|_{A_\pi})^*.$$
Hence for every $S$ in $\VN_\omega(G)$, we have
$$\tilde{\pi}(S)=S|_{A_\pi}.$$
Now fix a positive real number  $t$. Then $\pi_{it}^+$ and $\oplus_{\pi\in \hat{G}\setminus\{\pi_{it}^+\}}\pi$ are disjoint unitary representations of $\SL$, and
by Proposition 3.12 of \cite{arsac}, $A_{\pi_{it}^+}$ and $A_{\oplus_{\pi\in \hat{G}\setminus\{\pi_{it}^+\}}\pi}$ intersect trivially.
Therefore by the Hahn Banach theorem,
there exists an element $S$ in $\VN_\omega(G)$ such that $\widetilde{\pi_{it}^+}(S)\neq 0$ and $\tilde{\pi}(S)=0$ for every other representation $\pi$
in $\hat{G}$. Hence by Equation (\ref{eq: uu'-integral-formula}),
\begin{equation*}
\langle uu', S\rangle=\int_{\hat{G}\times\hat{G}}\left[\int_{\hat{G}} \Tr(\pi''(S)T_{\pi,\pi';\pi''})dm_{\pi,\pi'}(\pi'')\right]d(\mu\times\mu')(\pi,\pi')=0,
\end{equation*}
where we used the fact that $m_{\pi,\pi'}$ is continuous on the principal continuous series. Therefore $S$ vanishes on $B_0(G)^2$ but does not vanish
on $A_{\pi_{it}^+}$. Moreover, it is known that $A_{\pi_{it}^+}$ is a subset of $B_0(G)$ (e.g. an easy consequence of Kunze-Stein phenomena).
Thus we conclude that $B_0(G)$ is not square-dense.
\end{proof}

The following corollary is a natural consequence of Theorem \ref{b0-SL-not-sq-dense} together with Cohen factorization theorem on Banach algebras with bounded
approximate identities.
\begin{corollary}
Let $G$ denote the group $\SL$. Then $B_0(G)$ does not have a bounded approximate identity, and is not (operator) amenable.
\end{corollary}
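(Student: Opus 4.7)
The plan is to deduce the corollary from Theorem \ref{b0-SL-not-sq-dense} by a short contradiction argument using Cohen's factorization theorem, and then invoke the standard fact (recalled in the introduction to this section) that bounded approximate identities are a prerequisite for (operator) amenability.

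First I would suppose, toward a contradiction, that $B_0(\SL)$ admits a bounded approximate identity $\{e_\alpha\}_\alpha$. By Cohen's factorization theorem, every element $u$ of $B_0(\SL)$ can then be written as a product $u = vw$ with $v,w \in B_0(\SL)$. In particular, $B_0(\SL) = B_0(\SL)^2 \subseteq \overline{B_0(\SL)^2}$, so $B_0(\SL)$ is square-dense. This directly contradicts Theorem \ref{b0-SL-not-sq-dense}, so no such bounded approximate identity can exist.

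Next I would invoke the general principle recorded at the start of Section \ref{section:amenability-of-B0}: the existence of a bounded approximate identity is a necessary condition for amenability of a Banach algebra, and likewise for operator amenability of a completely contractive Banach algebra. Since $B_0(\SL)$ has no bounded approximate identity, it therefore fails to be amenable and fails to be operator amenable.

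No step here is genuinely difficult: all the work was carried out in Theorem \ref{b0-SL-not-sq-dense}, and the corollary is a purely formal consequence via Cohen's theorem. The only thing to verify carefully is that $B_0(\SL)$ is a Banach algebra to which Cohen's factorization applies, which is immediate since it is a closed subalgebra of $B(\SL)$.
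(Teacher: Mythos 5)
Your proposal is correct and is exactly the argument the paper intends: the paper derives the corollary from Theorem \ref{b0-SL-not-sq-dense} via Cohen's factorization theorem (a bounded approximate identity would force $B_0(\SL)=B_0(\SL)^2$, contradicting the failure of square-density), and then uses the fact that a bounded approximate identity is necessary for (operator) amenability. You have simply spelled out the details the paper leaves implicit.
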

\begin{remark}
We recall that a (completely contractive) Banach
algebra is (operator) weakly amenable if every (completely) bounded derivation of the algebra into its dual space is inner (see \cite{Bade-Curtis-Dales} for the
Banach algebra case and \cite{Forrest-wood} for the operator space setting). Spronk \cite{NicoA(G)wa}, and independently Samei \cite{samei-opr-w.a-A(G)}, showed that the Fourier algebra of a locally compact group is always operator weakly amenable. Moreover, it easily follows from Theorem \ref{b0-SL-not-sq-dense} that $B_0(\SL)$ is not even (operator) weakly amenable. This contrasts with the above-mentioned results on the amenability of the Fourier algebras, and illustrates the inherent distinction between the Rajchman algebra and the Fourier algebra.
\end{remark}

\subsection{Discrete groups}

\begin{proposition}\label{prop: discrete-with-inf-abelian-subgroup}
Let $G$ be a discrete group which has an infinite Abelian subgroup
$H$. Then,
$B_0(G)$ is not operator amenable. In particular, for
any positive integer $n$, the free group $\mathbb{F}_n$ with $n$
generators is not operator amenable.
In addition, $B_0(G)$ does not have a bounded approximate identity.
\end{proposition}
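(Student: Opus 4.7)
The plan is to leverage the restriction machinery developed in Section \ref{section:functorial} together with the abelian result of Section \ref{section:abelian}, in the same spirit as Proposition \ref{prop:noncpt-connected-SIN-NOT-opr-amen}. Since $G$ is discrete, the subgroup $H$ is automatically open in $G$. Hence Case (2) of Theorem \ref{thm2} applies, and the restriction map $r:B_0(G)\to B_0(H)$ is a surjective bounded algebra homomorphism. This places $B_0(H)$ in the role of the quotient on which we can pull down any obstruction to the existence of a bounded approximate identity via Proposition \ref{prop:hereditary-bai}.

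Next, I would identify $B_0(H)$ with the Rajchman algebra of the dual group. Since $H$ is an infinite discrete abelian group, its dual $\hat H$ is a compact, infinite, hence non-discrete locally compact abelian group. Under the Fourier--Stieltjes identification $B(H)\cong M(\hat H)$ (valid for abelian $H$), the intersection $B_0(H)=B(H)\cap C_0(H)$ corresponds precisely to $M_0(\hat H)$, the algebra of Rajchman measures on $\hat H$. Theorem \ref{thm:M0(Abelian)-no-b.a.i} then tells us that $M_0(\hat H)$ does not possess even an approximate identity.

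Now suppose, toward a contradiction, that $\{u_\alpha\}_\alpha$ is a bounded approximate identity for $B_0(G)$. Given any $v\in B_0(H)$, surjectivity of $r$ yields $u\in B_0(G)$ with $r(u)=v$, and then $r(u_\alpha)v=r(u_\alpha u)\to r(u)=v$, so that $\{r(u_\alpha)\}_\alpha$ is an approximate identity for $B_0(H)\cong M_0(\hat H)$. This contradicts Theorem \ref{thm:M0(Abelian)-no-b.a.i}, so $B_0(G)$ has no bounded approximate identity. Since the existence of a bounded approximate identity is necessary for operator amenability, the non-operator-amenability of $B_0(G)$ follows immediately.

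For the free group assertion, it suffices to observe that for any $n\geq 1$ the discrete group $\mathbb{F}_n$ contains an infinite cyclic subgroup (the one generated by any of its generators), so the main claim applies. I do not anticipate a genuine obstacle here: the proof is essentially a clean assembly of Theorem \ref{thm2}(2), the abelian Fourier--Stieltjes/measure algebra identification, Theorem \ref{thm:M0(Abelian)-no-b.a.i}, and the hereditary property of bounded approximate identities from Proposition \ref{prop:hereditary-bai}.
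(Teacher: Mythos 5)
Your proposal is correct and matches the paper's own proof in all essentials: both obtain surjectivity of the restriction map $r:B_0(G)\to B_0(H)$ from the functorial results of Section \ref{section:functorial} (you via openness of $H$ in the discrete group $G$, i.e.\ Theorem \ref{thm2}(2); the paper via the SIN property and closedness of subgroups), push a putative bounded approximate identity down to $B_0(H)\cong M_0(\hat H)$, and contradict Theorem \ref{thm:M0(Abelian)-no-b.a.i}. The only cosmetic difference is that you deduce the failure of operator amenability directly from the absence of a bounded approximate identity, whereas the paper separately invokes the hereditary property of operator amenability under surjective completely bounded homomorphisms together with Corollary \ref{cor:B_0(abelian-noncomp)Not-opr-amen}; both routes are immediate given the same machinery.
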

\begin{proof}
Discrete groups are SIN-groups, and any subgroup of a
discrete group is closed. By Theorem~\ref{thm2}, the restriction map $r:B_0(G)\rightarrow B_0(H)$ is
a surjective completely contractive homomorphism. Assume that
$B_0(G)$ is operator amenable. Then by  $B_0(H)$ is operator
amenable as well, which contradicts Corollary \ref{cor:B_0(abelian-noncomp)Not-opr-amen}
since $H$ is non-compact.

Now assume by contradiction that $B_0(G)$ has a bounded approximate identity, and let
$\{u_\alpha\}$ be a bounded approximate identity of $B_0(G)$. Then $\{u_\alpha|_H\}$ is a bounded approximate identity for $B_0(H)$ which is
a contradiction with Theorem~\ref{thm:M0(Abelian)-no-b.a.i}.
\end{proof}

Let $G$ be a discrete group. The group $G$ is called \emph{periodic} if  every element of $G$ is of finite order.
The group $G$ is called \emph{locally finite} (respectively $\rm{F}_2$) if every finite (respectively two-element) subset of
$G$ generates a finite subgroup of $G$.
Clearly the class of locally finite groups is
contained in the class of $\rm{F}_2$ groups, which in turn is
contained in the class of periodic groups. It has been shown in
\cite{hall-kulatilaka} that every infinite locally finite group
contains an infinite Abelian subgroup. More generally, every
infinite $\rm{F}_2$ group contains an infinite Abelian subgroup (see
\cite{strunkov}). The following corollary follows easily from  Proposition \ref{prop: discrete-with-inf-abelian-subgroup}.

\begin{corollary}\label{cor:discrete-opr-amen=>periodic-loc.fin-F2}
Let $G$ be a discrete group such that $B_0(G)$ is operator amenable. Then
\begin{itemize}
\item[1.] $G$ is periodic.
\item[2.] If $G$ is locally finite, then $G$ is finite.
\item[3.] If $G$ is $\rm{F}_2$, then $G$ is finite.
 \end{itemize}
\end{corollary}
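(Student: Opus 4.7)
The plan is to establish each of the three conclusions by contraposition against Proposition \ref{prop: discrete-with-inf-abelian-subgroup}: since operator amenability of $B_0(G)$ precludes the existence of an infinite Abelian subgroup of $G$, it suffices in each case to show that failure of the conclusion forces such a subgroup to appear.

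For part (1), I would argue directly: if $G$ is not periodic, then there exists $g \in G$ of infinite order, and the cyclic subgroup $\langle g\rangle$ is isomorphic to $\mathbb{Z}$. Since $\langle g\rangle$ is an infinite Abelian subgroup of $G$, Proposition \ref{prop: discrete-with-inf-abelian-subgroup} yields that $B_0(G)$ is not operator amenable, contradicting our hypothesis. Hence every element of $G$ has finite order.

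For parts (2) and (3), the key inputs are the two structural theorems referenced immediately before the corollary. Assume $G$ is locally finite (respectively, an $\mathrm{F}_2$-group) and infinite. By Hall--Kulatilaka \cite{hall-kulatilaka} in the locally finite case, and by Strunkov \cite{strunkov} in the $\mathrm{F}_2$ case, $G$ must then contain an infinite Abelian subgroup $H$. Applying Proposition \ref{prop: discrete-with-inf-abelian-subgroup} again furnishes the contradiction, forcing $G$ to be finite in either setting.

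There is no real obstacle here: the entire content of the corollary is packaged in the structural theorems about periodic and locally finite groups, together with the trivial observation that an element of infinite order generates a copy of $\mathbb{Z}$. The proof is a bookkeeping exercise combining these facts with the previously established proposition, which is presumably why the text describes the derivation as ``easy.''
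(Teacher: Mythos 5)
Your proposal is correct and is exactly the argument the paper intends: the paper states the corollary ``follows easily'' from Proposition \ref{prop: discrete-with-inf-abelian-subgroup}, relying on the same three ingredients you use --- an element of infinite order generates a copy of $\mathbb{Z}$ for part (1), and the Hall--Kulatilaka and Strunkov theorems supply the infinite Abelian subgroup for parts (2) and (3). Nothing is missing; your write-up simply makes the paper's implicit bookkeeping explicit.
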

We can now characterize the discrete solvable groups whose Rajchman algebras are operator amenable.
\begin{theorem}\label{thm:solvable}
Let $G$ be a solvable discrete group such that $B_0(G)$ is operator amenable. Then $G$ is finite.
\end{theorem}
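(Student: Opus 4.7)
The plan is to reduce the solvable case to the locally finite case already handled by Corollary \ref{cor:discrete-opr-amen=>periodic-loc.fin-F2}. The bridge is the classical fact that every solvable periodic group is locally finite, which is where essentially all of the group-theoretic work lives. Operator amenability of $B_0(G)$ is only used at the very beginning, through the already-proved implication that $G$ must be periodic.

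First I would invoke Corollary \ref{cor:discrete-opr-amen=>periodic-loc.fin-F2}(1) to conclude that $G$ is a periodic (torsion) group. Next I would argue that a solvable periodic group is locally finite by showing that any finitely generated solvable periodic group $H$ is finite; this is an induction on the derived length of $H$. The base case is that a finitely generated abelian torsion group is finite, which is immediate from the structure theorem for finitely generated abelian groups. For the inductive step, if $H$ has derived length $n\geq 2$, then $H/H'$ is a finitely generated abelian torsion group and hence finite, so $H'$ has finite index in $H$. By Schreier's lemma (a finite-index subgroup of a finitely generated group is finitely generated), $H'$ is finitely generated; being a solvable torsion group of derived length $n-1$, it is finite by the inductive hypothesis, and therefore $H$ is finite.

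Having established that $G$ is locally finite, the theorem follows immediately from Corollary \ref{cor:discrete-opr-amen=>periodic-loc.fin-F2}(2), which gives that a locally finite discrete group with operator amenable Rajchman algebra is finite.

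The only potential obstacle is making sure the classical group-theoretic ingredient is cited (or sketched) cleanly; there is no operator-algebraic or representation-theoretic subtlety beyond what the previous corollary already provides. No bounded approximate identity arguments need to be redone, and no restriction map on $B_0$ is invoked directly here, since all of that work is packaged inside Corollary \ref{cor:discrete-opr-amen=>periodic-loc.fin-F2} via Proposition \ref{prop: discrete-with-inf-abelian-subgroup}.
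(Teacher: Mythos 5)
Your proof is correct, but it follows a genuinely different route from the paper's. You use operator amenability only twice, both times through the already-established Corollary \ref{cor:discrete-opr-amen=>periodic-loc.fin-F2}: part (1) gives periodicity, and part (2) finishes once you know $G$ is locally finite; the new work is the purely group-theoretic bridge that a periodic solvable group is locally finite, which you prove correctly by induction on derived length of a finitely generated subgroup $H$ (finitely generated abelian torsion groups are finite; $H/H'$ finite forces $H'$ to be finitely generated by Schreier's lemma, hence finite by induction). The paper instead runs an induction on the length of the subnormal series in which the functional analysis is interleaved with the group theory: at each stage it uses the functorial property of $B_0$ (surjectivity of the restriction map, Theorem \ref{thm2}, plus the hereditary property of operator amenability) to conclude that the penultimate term $G_1$ is finite, then shows by an explicit word computation modulo $G_1$ that every two-generated subgroup of $G$ is finite, so $G$ is $\rm{F}_2$, and concludes via the cited fact that infinite $\rm{F}_2$ groups contain infinite Abelian subgroups. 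Your version is more modular: it needs no fresh appeal to the restriction theorem inside the induction, and it cleanly separates the analytic input (packaged in the Corollary) from a classical group-theoretic lemma. The paper's version buys self-containedness at the group-theory level (the word computation replaces the appeal to Schreier's lemma and the structure theorem), at the cost of re-invoking the $B_0$ functoriality at every inductive step. Note also that your route through part (2) rests on the Hall--Kulatilaka theorem, whereas the paper's rests on Strunkov's result on $\rm{F}_2$ groups; since a locally finite group is trivially $\rm{F}_2$, you could equally well finish with part (3), so this difference is immaterial.
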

\begin{proof}
Suppose $G$ is solvable, i.e. it has a series $\{e\}=G_0\lhd G_1\lhd \ldots \lhd G_k=G$ such that $G_i$ is normal in $G_{i+1}$ and the quotient $G_{i+1}/G_i$
is Abelian for $i=0,\ldots,k-1$.
we proceed by induction on the length of the subnormal series:

\noindent Case 1:  If $k=1$, then $G$ is Abelian and we are done. So we start with $k=2$,
and assume that $\{e\}=G_0\lhd G_1\lhd G_2=G$ is a subnormal series such that $G_1$ and $G/G_1$ are Abelian.
By functorial properties of $B_0$, we have that $B_0(G_1)$ is operator amenable as well. Hence $G_1$ is finite by Corollary \ref{cor:B_0(abelian-noncomp)Not-opr-amen}.
 Now let $g_1,g_2$ be two elements in the group $G$, and let
$w=g_1^{\alpha_1}g_2^{\beta_1}\ldots g_1^{\alpha_n}g_2^{\beta_n}$ be a word in the group generated by $g_1$ and $g_2$. Then
\begin{eqnarray*}
g_1^{\alpha_1}g_2^{\beta_1}\ldots g_1^{\alpha_n}g_2^{\beta_n}G_1&=&(g_1^{\alpha_1}G_1)(g_2^{\beta_1}G_1)\ldots
(g_1^{\alpha_n}G_1)(g_2^{\beta_n}G_1)\\&=&(g_1^{\alpha_1}G_1)\ldots(g_1^{\alpha_n}G_1)\times
(g_2^{\beta_1}G_1)\ldots(g_2^{\beta_n}G_1)\\&=&g_1^{\sum \alpha_i}g_2^{\sum \beta_i}G_1,
\end{eqnarray*}
therefore every word in $\langle g_1,g_2\rangle$ is of the form $g_1^\alpha g_2^\beta z$ for some $z$ in $G_1$. Moreover $g_1$ and $g_2$ are periodic since the group has an operator amenable Rajchman algebra. Therefore $\langle g_1,g_2\rangle$ is finite, i.e. $G$ is $F_2$.
Recall that infinite  $F_2$ groups always have infinite Abelian subgroups, hence their Rajchman algebras cannot be operator amenable.
Therefore $G$ is finite.\\

\noindent Case 2: First note that the group is periodic. Suppose that for periodic solvable groups of subnormal series of length less than
 $n$, if $B_0(G)$ is operator amenable then $G$ is finite (induction hypothesis). Let $G$ be a periodic solvable group with the subnormal series
$\{1\}=G_0\leq G_1\leq \ldots \leq G_n=G$. Then by functorial properties and induction hypothesis, $G_{n-1}$ is finite. Repeating the same argument as in Case 1, we get that $G$ is finite as well.
\end{proof}

\bibliographystyle{plain}
\bibliography{thesis}

\end{document}